\documentclass[12pt,a4paper,fleqn]{article}

\usepackage{amssymb,amsmath}

\newtheorem{theorem}{Theorem}[section]
\newtheorem{lemma}[theorem]{Lemma}

\newcommand{\ord}{{\rm ord}}

\newcommand{\Li}{{\rm Li}}

\newenvironment{proof}{{\it Proof}:}{\hfill$\square$\\ $\phantom{A}$\\}

\begin{document}

\title{Integration in terms of polylogarithm}

\author{
W. Hebisch \\
{\small  Mathematical Institute, Wroc\l aw University, Poland}
}
\date{}
\maketitle

\begin{abstract}
This paper provides a Liouville principle for integration in
terms of dilogarithm and partial result for polylogarithm. 
\end{abstract}

\section{Introduction}

Indefinite integration is classical task studied from beginning
of calculus: given $f$ we seek $g$ such that $f = g'$.
The first step in deeper study of integration is
to delimit possible form of integrals.  In case of elementary
integration classical Liouville-Ostrowski theorem says that
only new transcendentals that can appear in $g$ are logarithms.
More precisely,
when $f\in L$ where $L$ is a differential field with
algebraically closed constant field and $f$ has integral
elementary over $L$, then
$$
f = v_0' + \sum c_i\frac{v_i'}{v_i}
$$
where $v_i \in L$ and $c_i\in L$ are constants.

However, there are many elementary function which do not have
elementary integrals and to integrate them we introduce new
special functions in the integral.  In this paper we study
integration in terms of polylogarithms.  Polylogarithms
appear during iterated integration of rational functions
\cite{Kummer}, so they are very natural extension of
elementary functions.  Integration
in terms of polylogarithms was studied by Baddoura \cite{Bad94}, \cite{Bad06},
but he only handled integrals in transcendental extensions.
Also, he gave proofs only in case of dilogarithm.
In recent article Y. Kaur and V. R. Srinivasan \cite{KaSri}
give Liouville type principle for larger class of functions.
They use different arguments, but for dilogarithm their
results are essentially equivalent to that of Baddoura.
  When
seeking integral we allow also algebraic extensions.
We give partial result for polylogarithm (Theorem \ref{exp-case}).
It seems that we give first proof of nontrivial symbolic
integration result for polylogarithms of arbitrary integer
order (Baddoura in \cite{Bad11} gives a useful result, but
leaves main difficulty unresolved).

In case of primitive extensions our result are based on abstract
version of dilogarithm identity given by Baddoura.  In exponential
case we base our proof on a lemma about independence of
logarithmic forms (Lemma \ref{pole-indep}) which may be of independent
interest.

\section{Setup and preliminaries}

Classical polylogarithm $\Li_s$ is defined by series
$$
\Li_s(z) = \sum_{k=1}^\infty \frac{z^k}{k^s}
$$
which is convergent for $|z| < 1$ and extended by
analytic continuation to multivalued function.
For purpose of symbolic integration of particular interest
are polylogarithms of integer order.  Differentiating
the series we get the reccurence relation
$$
z\partial_z\Li_s(z) = \Li_{s-1}(z)
$$
so
$$
\Li_s(z) = \int_{0}^z \frac{\Li_{s-1}(t)}{t}dt.
$$
We have
$$
\Li_1(z) = -\log(1 - z)
$$
so for positive integer $n$ polylogarithm $\Li_n(z)$ is
a Liouvillian function.  For $n > 2$ multiple integration
implied by the reccurence relation is inconvenient, so
we introduce functions $I_m$ by the formula
$$
I_m(z) = \int_0^z \log(t)^{m - 1}\frac{dt}{1 - t}.
$$
For integer $m \geq 1$ we have
$$
\Li_m(z) = \frac{(-1)^{m-1}}{(m - 1)!}I_m
-\sum_{k=1}^{m-1} \frac{(-1)^k}{k!}\Li_{m - k}(z)\log(z)^k.
$$
Namely, applying $z\partial_z$ to both sides, by direct calculation
we get equality, so difference between both sides is a constant.
By computing limit when $z$ tends to $0$ we see that the constant
is zero.
Consequently, $\Li_m(z)$ can be expressed in terms of $\log(z)$
and $I_k(z)$, with $k=1,\dots,m$.  Similarly $I_m(z)$ can be expressed
in terms of $\log(z)$ and $\Li_k(z)$, with $k=1,\dots,m$.
This means that integration in terms of polylogarithms is
equivalent to integration in terms of $I_m(x)$.

In the sequel we assume standard machinery of differential fields
(see for example \cite{Ros}).  We will denote derivative in
differential fields by $D$, except for cases when we will need
more than one derivative.
When $u$
is element of a differential field $K$ we
have $DI_k(u) = (D(u-1)/(u-1))\log(u)^k = (D\log(u - 1))\log(u)^k$.

We say that a differential field $L$
is a dilogarithmic extension of $F$ iff there exists
$\theta_1, \dots, \theta_n \in L$ such that
$L = F(\theta_1,\dots, \theta_2)$ and for each $i$, $1\leq i \leq n$,
one of the following holds
\begin{enumerate}
\item $\theta_i$ is algebraic over $F_i$
\item $\frac{D\theta_i}{\theta_i} = Du$ for some $u \in F_i$
\item $D\theta_i = \frac{Du}{u}$ for some $u \in F_i$
\item $D\theta_i = \frac{D(u - 1)}{u - 1}v^k$ for some $u, v \in F_i$
such that $Du = (Dv)u$ and integer $k > 0$
\end{enumerate}
where $F_i = F(\theta_1, \dots, \theta_{i-i})$.  When only
first three cases appear we say that $L$ is an elementary
extension of $F$.  Intuitively
clauses 2 to 4 above mean $\theta_i = \exp(u)$,
$\theta_i = \log(u)$, $\theta_i = I_k(u)$ respectively.
However, logarithm and $I_k$ is only determined up to additive constants
by equations above.  Similarly, exponential is only determined
up to multiplicative constants.  Our results about integrability
do not depend on specific choice of exponentials and logarithms:
different choice only changes elementary part of the integral,
but does not affect integrability.  We take advantage of
this freedom and in the proofs assume that for nonzero $x$ and $y$ we have
\begin{equation}\label{log-add}
\log(xy) = \log(x) + \log(y),
\end{equation}
\begin{equation}\label{log-neg}
\log(-x) = \log(x).
\end{equation}
Note that the second formula implies that $\log(-1) = 0$, which
does not agree with definition in calculus, but still satisfies
$D\log(-1) = D(-1)/(-1) = 0$.  In fact, we assume that logarithms
of roots of unity are all $0$.  Note that torsion subgroup of
multiplicative group $F_{*}$ of a field $F$ consists of roots of unity.  So
$F_{*}$ divided by roots of unity is a torsion free group.
Any finitely generated subgroup of an abelian torsion free group
is a free subgroup.  So given any finite subset $S$ of $F_{*}$
we consider multiplicative subgroup $G$ generated by $S$ modulo
roots of unity.  Then on generators $g$ of $G$ we can choose
logarithms in any way consistent with equation $D\log(g) = (Dg)/g$
and extend by linearity to whole $G$.  This ensures that
(\ref{log-add}) and (\ref{log-neg}) hold on subgroup of $F_{*}$
generated by $S$.  Since we will simultaneously use only finite
number of elements, we can assume that (\ref{log-add}) and
(\ref{log-neg}) hold for all elements that we will use.  Also,
in various places when we consider logarithms we assume that
they will not add new constants.  Namely, adding transcendental
$\log(a)$ to a base field adds new constant iff $a$ already has
logarithm in the base field.  So by adding transcendental
logarithms only when we can not find logarithm in base field
we ensure that there will be no new constants.

It is well-known that when a field $K$ is an extension of transcendental
degree $1$ of $F$, which is finitely generated over $F$, then
$K$ can be treated as a function field on an algebraic curve
defined over $F$ (\cite{Chev}).  Standard tool in this situation is
use of Puiseaux expansions.

For convenience we give a few known lemmas:
\begin{lemma}\label{mult-free}
Let $F$ be a field, $\bar F$ its algebraic closure,
$K$ an extension of $F$ of transcendental
degree $1$, $\psi_1,\dots,\psi_n \in K - \{0\}$ a finite
family.  Multiplicative group $G$ generated by $\psi_1,\dots,\psi_n$
modulo $\bar F$ is a free abelian group.
\end{lemma}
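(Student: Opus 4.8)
The plan is to realize $K$ as the function field of a curve and exploit the divisor map. Write $C$ for the algebraic closure of $F$ inside $K$, i.e. the field of constants of $K$; since $\bar F\cap K=C$, for the subgroup $G=\langle\psi_1,\dots,\psi_n\rangle\subseteq K^*$ (or its image in $(K\bar F)^*$) the quotient ``$G$ modulo $\bar F$'' is precisely $G/(G\cap C^*)$, the image of $G$ in $K^*/C^*$. By the standard correspondence (\cite{Chev}), $K$ is the function field of a curve over $C$, and one has the divisor homomorphism $\divisor\colon K^*\to \mathrm{Div}(K/C)$, where $\mathrm{Div}(K/C)$ is the \emph{free} abelian group on the places of $K/C$.

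The crux is to compute $\ker(\divisor)$, and I claim it equals $C^*$. A nonzero constant visibly has no zeros and no poles. Conversely, if $f\in K^*$ is not algebraic over $C$, then — since $C$ is algebraically closed in $K$ and $\trdeg(K/C)=1$ — the element $f$ is transcendental over $C$ and $[K:C(f)]<\infty$; the pole of $f$ in $C(f)$ extends to some place of $K$ at which $f$ still has negative valuation, so $\divisor(f)\neq 0$. Hence $\divisor$ induces an injection $K^*/C^*\hookrightarrow \mathrm{Div}(K/C)$.

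Since $\mathrm{Div}(K/C)$ is free abelian it is torsion-free, and therefore so is the subgroup $K^*/C^*$, and therefore so is its further subgroup $G/(G\cap C^*)$. The latter is generated by the finitely many classes $[\psi_1],\dots,[\psi_n]$, so it is a finitely generated torsion-free abelian group, and by the structure theorem for finitely generated abelian groups it is free. This is exactly the assertion of the lemma.

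The algebraic-geometry input here is entirely standard, so I expect the only point really needing care to be the bookkeeping around the constant field: checking that ``modulo $\bar F$'' coincides with ``modulo the constants of $K$'', and — if one prefers to argue over an algebraically closed base — carrying out the harmless reduction that replaces $F$ by $\bar F$ and $K$ by the compositum $K\bar F$, whose field of constants is then $\bar F$ itself because $\trdeg(K\bar F/\bar F)=1$.
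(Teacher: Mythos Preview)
Your argument is correct and is essentially the same as the paper's: both observe that $G$ is finitely generated abelian, so freeness reduces to torsion-freeness, and both establish torsion-freeness by the fact that a nonconstant function on a curve has a nontrivial zero (equivalently, nontrivial divisor), hence so do all its powers. The paper phrases this last step directly (``$f\notin\bar F$ has a zero at some place $p$, so $f^n$ has a zero at $p$, so $f^n\notin\bar F$''), whereas you package it as the injectivity of the divisor map $K^*/C^*\hookrightarrow\mathrm{Div}(K/C)$, but the content is identical.
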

\begin{proof}
Without loss of generality we may assume that $K$ is generated
by $\psi_1,\dots,\psi_n$ over $\bar F$, so $K$ is a finite extension
of $\bar F$
Since $G$ is finitely generated abelian group it is enough to
show that it is torsion free.  However, a nonzero element of
$G$ is a algebraic function $f$ not in $\bar F$, that is having
a zero at same place $p$.  Any power of $f$ has zero at $p$,
so is not in $\bar F$, so not a zero element of $G$.
\end{proof}

\begin{lemma}\label{log-lin}
Let $F$ be a differential field, $v_1, \dots, v_n \in F - \{0\}$.
Assume that $F$ has the same constants as $F(\log(v_1), \dots, \log(v_n))$.
Then
$\log(v_1), \dots, \log(v_n)$ are algebraically dependent over
$F$ if and only if $\log(v_i)$ are linearly dependent over constants
modulo $F$.
\end{lemma}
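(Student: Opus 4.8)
The easy half of the equivalence is immediate: a linear relation $\sum_i c_i\log(v_i)=f$ with $f\in F$ and the $c_i$ constants, not all zero, is in particular an algebraic (degree one) relation over $F$ with not all coefficients zero, so the $\log(v_i)$ are algebraically dependent over $F$.

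For the substantive direction, assume $\log(v_1),\dots,\log(v_n)$ are algebraically dependent over $F$. Pick a maximal subset that is algebraically independent over $F$; after renumbering we may take it to be $\log(v_1),\dots,\log(v_m)$ with $m<n$, and put $E=F(\log(v_1),\dots,\log(v_m))$. Then $E$ is a purely transcendental extension of $F$, $E$ has the same constants as $F$ (it lies between $F$ and $F(\log(v_1),\dots,\log(v_n))$, which by hypothesis has no new constants), and $t:=\log(v_{m+1})$ is algebraic over $E$. Set $b:=D\log(v_{m+1})=Dv_{m+1}/v_{m+1}\in F$ and let $P(T)=T^d+a_{d-1}T^{d-1}+\dots+a_0\in E[T]$ be the minimal polynomial of $t$ over $E$. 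Applying $D$ to $P(t)=0$ and using $Dt=b$ gives a polynomial expression in $t$ of degree at most $d-1$ that vanishes, hence is the zero polynomial by minimality of $P$; its coefficient at $T^{d-1}$ reads $Da_{d-1}+d\,b=0$, i.e. $D\bigl(a_{d-1}+d\log(v_{m+1})\bigr)=0$. Since there are no new constants, $a_{d-1}+d\log(v_{m+1})$ equals a constant $c$, so $\log(v_{m+1})=(c-a_{d-1})/d\in E$, and moreover $g:=a_{d-1}\in E$ satisfies $Dg=-d\,b\in F$.

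What remains is a structure statement: every $g\in E=F(\log(v_1),\dots,\log(v_m))$ with $Dg\in F$ has the form $g=h+\sum_{i=1}^m c_i\log(v_i)$ with $h\in F$ and the $c_i$ constants. Granting this and feeding it into $\log(v_{m+1})=(c-a_{d-1})/d$ exhibits a nontrivial dependence of $\log(v_1),\dots,\log(v_{m+1})$, hence of all the $\log(v_i)$, over the constants modulo $F$, which is the conclusion. I would prove the structure statement by induction on $m$: writing $E=E'(s)$ with $s=\log(v_m)$ transcendental over $E'=F(\log(v_1),\dots,\log(v_{m-1}))$ and $Ds\in F\subseteq E'$, a partial fraction computation in $s$ over $E'$ (using that $Ds\in E'$, so every irreducible $q\in E'[s]$ has $q\nmid Dq$, whence such a $q$ would appear to a strictly higher power in $Dg$) shows $g$ must be a polynomial in $s$; a degree comparison then forces that polynomial to have degree at most $1$ with constant leading coefficient, and writing $g=c_m s+g_0$ one checks $Dg_0\in F$ and applies the induction hypothesis to $g_0\in E'$. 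The base case $m=0$ is trivial.

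The crux is this last structure statement --- essentially the Risch--Rosenlicht analysis of antiderivatives in a transcendental tower of logarithms --- together with the bookkeeping needed to propagate the no-new-constants hypothesis to the intermediate fields $E$ and $E'$: it is exactly this that makes $c$ above a genuine constant of $F$ and that prevents a term such as $s$ from being spuriously absorbed into a constant. The remaining ingredients (the trivial direction, the choice of a transcendence basis among the $\log(v_i)$, and the differentiation of the minimal polynomial) are routine.
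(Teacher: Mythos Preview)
Your proof is correct but follows a genuinely different route from the paper's. The paper invokes Rosenlicht's machinery of differential forms directly: from the algebraic dependence of the $\log(v_i)$ it deduces (Rosenlicht, Theorem~1) a constant-coefficient relation among the forms $\frac{dv_i}{v_i}-d\log(v_i)$ in $\Omega_{K/F}$, rewrites it as $\sum_j\gamma_j\,\frac{dt_j}{t_j}+dw=0$ with the $t_j$ integer monomials in the $v_i$ and $w=\sum_i c_i\log(v_i)$, applies Rosenlicht's Proposition~4 to force $w$ algebraic over $F$, and finishes with a trace. You instead single out one $\log(v_{m+1})$ algebraic over the purely transcendental subtower $E$, differentiate its minimal polynomial to drop into $E$, and then run an inductive Risch-type descent through the logarithmic tower (no special polynomials since $Ds\in E'$ and there are no new constants, hence any element with derivative in $F$ is of degree $\le 1$ in each $\log(v_i)$ with constant leading coefficient). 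The paper's approach is shorter once Rosenlicht's results are in hand and treats all the $\log(v_i)$ symmetrically; your approach is self-contained and closer to the algorithmic viewpoint, at the cost of the bookkeeping you flag---checking $q\nmid Dq$ and propagating the no-new-constants hypothesis through each layer of the tower.
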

\begin{proof}
We have $n$ equations:
$$
\frac{Dv_i}{v_i} - D\log(v_i) = 0.
$$
If $\log(v_1), \dots, \log(v_n)$ are algebraically dependent,
then transcendental degree of $K = F(\log(v_1), \dots, \log(v_n))$
over $F$ is smaller than $n$.
By \cite{Ros} Theorem 1 differential forms
$$
\frac{dv_i}{v_i} - d\log(v_i) \in \Omega_{K/F}
$$
are linearly dependent over constants, so there exists
nonzero constants $c_1,\dots,c_n$ such that
$$
\sum c_i\frac{dv_i}{v_i} - d\left(\sum c_i\log(v_i)\right) = 0.
$$
Choose basis
$\gamma_1,\dots,\gamma_r$ for vector space over rationals
generated by $c_1,\dots,c_n$ so that each $c_i$
can be written as $c_i = \sum \alpha_{i,j}\gamma_j$
with integer $\alpha_{i,j}$.  Write
$t_j = v_1^{\alpha_{1,j}}\dots v_n^{\alpha_{n,j}}$.
Then
$$
\sum \gamma_j\frac{dt_j}{t_j} + dw = 0
$$
where $w = \sum c_i\log(v_i)$.  By \cite{Ros} Proposition 4
$w$ is algebraic over $F$.  Since
$$
Dw = \sum c_i\frac{Dv_i}{v_i}
$$
and right hand side is in $F$ we have $Dw \in F$.  By taking
trace we see that $w$ is in $F$.  So $\log(v_i)$ are linearly
dependent over constants modulo $F$.
\end{proof}

\begin{lemma}\label{log-int}
Let $F$ be a differential field with algebraically closed
constant field, $v_1, \dots, v_n \in F - \{0\}$.
Assume that $F$ has the same constants as $F(\log(v_1), \dots, \log(v_n))$.
If $f \in F[\log(v_1), \dots, \log(v_n)]$
has integral elementary over $F$, then there exists
$E \in F[\log(v_1), \dots, \log(v_u)]$,
$u_1,\dots,u_k \in F$  and constants $c_1,\dots,c_k$ such that
$$
f = DE + \sum c_i\frac{Du_i}{u_i}.
$$
\end{lemma}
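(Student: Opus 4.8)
The plan is to first reduce to the case that $\log(v_1),\dots,\log(v_n)$ are algebraically independent over $F$, and then, starting from the shape of $f$ furnished by the Liouville-Ostrowski theorem, to run a valuation (pole) analysis inside the polynomial ring they generate. For the reduction: if the $\log(v_i)$ are algebraically dependent over $F$, then by Lemma \ref{log-lin} there are constants $c_i$, not all zero, with $\sum_i c_i\log(v_i)=g\in F$; solving for one $\log(v_{i_0})$ with $c_{i_0}\neq 0$ expresses it as $\sum_{i\neq i_0}\lambda_i\log(v_i)+\lambda_0$ with all $\lambda_j\in F$. Substituting this into $f$ keeps $f$ a polynomial over $F$ in the remaining logarithms, does not change its integral, leaves the field $F(\log(v_1),\dots,\log(v_n))$ and hence the ``no new constants'' hypothesis unchanged, and decreases $n$. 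Iterating, we reach either $f\in F$ --- where Liouville's theorem over $F$ gives the claim at once --- or the case that $t_i:=\log(v_i)$ are algebraically independent over $F$, which we now assume.

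Let $R=F[t_1,\dots,t_n]$ and let $L=F(t_1,\dots,t_n)$ be its fraction field. Since $Dt_i=Dv_i/v_i\in F$, the derivation $D$ maps $R$ into itself; $R$ is a unique factorization domain with $R^{\times}=F^{\times}$; and by hypothesis $L$ has the same, algebraically closed, constant field $C$ as $F$. If $M\supseteq F$ is an elementary extension containing an antiderivative of $f$, then adjoining $t_1,\dots,t_n$ to $M$ as indeterminates produces an elementary extension of $L$ containing that antiderivative, because each generator of $M/F$ is a logarithm, exponential, or algebraic element over the preceding subfield and remains such over its compositum with $L$; hence $f$ has an integral elementary over $L$. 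By Liouville's theorem applied over $L$ (whose field of constants is algebraically closed; see \cite{Ros}) we may write $f=Dv_0+\sum_i c_i\,Dw_i/w_i$ with $v_0\in L$, $w_i\in L^{\times}$, $c_i\in C$. Writing each $w_i$, up to a factor in $F^{\times}$, as a product of integer powers of monic irreducibles of $R$ and regrouping, this becomes
$$
f=Dv_0+\rho+\sum_t\beta_t\,\frac{Dp_t}{p_t},\qquad \rho:=\sum_s\alpha_s\,\frac{Da_s}{a_s}\in F,
$$
with $a_s\in F^{\times}$, with the $p_t$ pairwise non-associate irreducibles of $R$ not lying in $F$, and with $\alpha_s,\beta_t\in C$.

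The crux --- and the step I expect to be the main obstacle --- is the assertion that every irreducible $p\in R\setminus F$ satisfies $p\nmid Dp$. This follows from a degree count: $D$ cannot raise the total degree in the $t_i$, so $p\mid Dp$ would force $Dp=\alpha p$ with $\alpha\in R$ of degree $0$, i.e.\ $\alpha\in F$; comparing the top-degree homogeneous parts of $Dp$ and $\alpha p$ gives $\alpha=Dc/c$ for every nonzero coefficient $c$ of the leading form of $p$, whence $D(p/c)=0$, so $p/c\in C$ and $p\in F$ --- a contradiction. Granting this, for such a $p$ and any $g\in L$ with $v_p(g)=-m<0$ one gets $v_p(Dg)=-m-1$, since in $D(p^{-m}h)$ (where $v_p(h)=0$) the two terms have distinct valuations. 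Now apply this to the displayed identity: $f$ and $\rho$ lie in $R$ and so have nonnegative $p_t$-valuation, while $\sum_t\beta_t Dp_t/p_t$ has $p_t$-valuation at least $-1$; a pole of $v_0$ of order $m\ge1$ at some $p_t$ would force $Dv_0$, and hence $f$, to have a pole of order $m+1\ge2$ there, which is impossible. Thus $v_0\in R$. Then $Dv_0,\rho\in R$ force $\sum_t\beta_t Dp_t/p_t\in R$; but this sum has $p_t$-valuation exactly $-1$ at each $p_t$ with $\beta_t\neq0$, so all $\beta_t=0$. We are left with $f=Dv_0+\sum_s\alpha_s\,Da_s/a_s$ where $v_0\in R=F[t_1,\dots,t_n]$ and $a_s\in F$, which is exactly the desired conclusion, with $E=v_0$, the $u_i$ being the $a_s$, and the $c_i$ being the $\alpha_s$.
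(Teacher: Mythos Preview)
Your proof is correct and follows the same overall strategy as the paper: reduce via Lemma~\ref{log-lin} to algebraically independent logarithms, then argue that the elementary integral has polynomial ``rational part'' and logarithmic part with arguments in $F$. The difference is that the paper outsources this second step entirely to Bronstein's structure theorem \cite{Bro:str} (Theorem~1 and Proposition~1 there), whereas you unpack the argument by hand: you apply Liouville's theorem over $L$, verify directly that no irreducible $p\in R\setminus F$ is special (i.e.\ $p\nmid Dp$) via a degree comparison, and then run a $p$-adic valuation argument to force $v_0\in R$ and all $\beta_t=0$. Your route is more self-contained and makes the mechanism transparent; the paper's is shorter by citation. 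In fact your treatment of $p\nmid Dp$ is slightly more careful than the paper's one-line justification, since you correctly handle the possibility that $Dp$ has the same total degree as $p$ (when some leading coefficient is non-constant).

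Two small points to tighten. First, in concluding $v_0\in R$ you only explicitly rule out poles of $v_0$ at the $p_t$; you must also rule out poles at irreducibles $q$ not among the $p_t$, but the same valuation argument applies verbatim (indeed more easily, since then $\sum_t\beta_t\,Dp_t/p_t$ has $v_q\ge 0$). Second, the phrase ``adjoining $t_1,\dots,t_n$ to $M$ as indeterminates'' is imprecise, since the $t_i$ may already lie in (or be algebraic over) $M$; what you want is simply the compositum $ML$, which is elementary over $L$ because each generator of $M/F$ remains a logarithm, exponential, or algebraic element over the corresponding subfield extended by $L$.
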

\begin{proof}
Without loss of generality we may assume that $\log(v_1),\dots,
\log(v_k)$ are algebraically independent and
$\log(v_{k+1}),\dots,\log(v_n)$ are algebraic over
$K = F(\log(v_1), \dots, \log(v_k))$.  By Lemma \ref{log-lin}
for $l = k+1,\dots,n$ we can write $\log(v_l)$ as
a linear combination of $\log(v_1), \dots, \log(v_k)$ and element
from $F$.  So, we can rewrite $f$ in terms of $\log(v_1),\dots,\log(v_k)$,
that is assume that $f \in F[\log(v_1), \dots, \log(v_k)]$.
Now, we can use \cite{Bro:str} Theorem 1.  Namely, 
due to algebraic independence of $\log(v_1), \dots, \log(v_k)$ 
the ring $R = F[\log(v_1), \dots, \log(v_k)]$ is a polynomial
ring in $k$ variables and $D$ is a derivation on $R$. 
Since $Dp$ has lower degree than $p$ by \cite{Bro:str} Proposition 1
there are no irreducible special polynomials.  Since $DF \subset F$
we can apply \cite{Bro:str} Theorem 1.  Our $f$ has denominator $1$,
so by \cite{Bro:str} Theorem 1 also $E$ has denominator $1$
and in logarithmic part we get logarithms of elements of $F$
(here we use fact that there are no irreducible special polynomials).
\end{proof}

\section{Main results}

Let $K$ be a differential field and $F$ be a differential
subfield of $K$.  We say that $f \in K$
has integral with polylog terms defined over $F$ when
there is extension $L$ of $K$ by some number of logarithms
of elements of $K$ and in $L$ we have
$$
f = DE + \sum d_i\frac{D(1 - h_i)}{1 - h_i}\log(h_i)^{k_i}
$$
where $h_i \in F$, $d_i$ are constants $k_i$ are
positive integers and $E$ is elementary over $K$.
We say that $f \in K$ has integral with dilog terms defined over $F$ when
there is expression as above and all $k_i = 1$.

Note that applying Lemma \ref{log-int} to equation
$g = DE$ where $g$ is difference of $f$ and polylog terms
we see that $E$ is is sum of polynomial
in $\log(h_i)$ with coefficients in $K$ and linear
combination of logarithms of elements of $K$ with
constant coefficients.

\begin{theorem}\label{exp-case}
Let $f\in F$, $f$ has integral with polylog terms defined
over $K$.
If $\theta$ is an exponential,  $K$ is algebraic over $F(\theta)$,
$F$ and $K$ have the same constants,
then $f$ has integral with polylog terms defined over an
algebraic extension of $F$.
\end{theorem}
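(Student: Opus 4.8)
If $\theta$ is algebraic over $F$ then $K$ is algebraic over $F$ and there is nothing to prove, so assume $\theta$ is transcendental over $F$; set $t=\theta$, so that $Dt=(Du)t$ with $u\in F$ and $K$ is a finite algebraic extension of $F(t)$. Replacing $F$ by $\bar F$ and $K$ by the compositum $K\bar F$ — an algebraic extension which, the constant field being algebraically closed, introduces no new constants — we may assume that $F$ is algebraically closed, so that $K$ is a finite extension of $F(t)$ and every place of $K$ is $F$-rational; any integral produced over this field is in particular an integral over an algebraic extension of the original base field. As in Section~2 we normalize the finitely many logarithms occurring so that \eqref{log-add} and \eqref{log-neg} hold and no new constants arise; in particular $\log(t)=u$.

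Write the hypothesis as
\[
f\;=\;DE\;+\;\sum_i d_i\,\frac{D(1-h_i)}{1-h_i}\,\log(h_i)^{k_i},\qquad h_i\in K,
\]
and apply the remark following the definition of integral with polylog terms (which rests on Lemma~\ref{log-int}) to $DE=f-\sum_i d_i(\cdots)$: then $E=P+\sum_j c_j\log(g_j)$ with $P$ a polynomial in the $\log(h_i)$ having coefficients in $K$, with $g_j\in K$ and $c_j$ constants. The feature of the exponential case we exploit is that $t$ is the only monic irreducible special polynomial in $F[t]$; equivalently, the places of $K$ lying above $t=0$ and $t=\infty$ are the only ones at which $D$ fails to strictly raise the order of a pole.

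The heart of the argument is the analysis of this identity at the remaining, non-special places. Organize both sides by degree in the $\log(h_i)$ and descend from the top degree; comparing coefficients of a fixed monomial in the $\log(h_i)$ yields a relation among the logarithmic forms $D(1-h_i)/(1-h_i)$, $Dh_j/h_j$ and $Dg_j/g_j$ and the derivatives of the coefficients of $P$. At a non-special place a pole of a coefficient of $P$ would force a pole of order $\ge 2$ in $DE$, while the logarithmic forms have only simple poles and the $\log(h_i)$ only logarithmic singularities; since $f\in F$ is regular there, all of these must cancel. Lemma~\ref{pole-indep} supplies precisely the needed independence, over constants and modulo forms coming from $F$, of the logarithmic forms attached to the non-special places, and so lets one conclude that no such place actually intervenes: after a bounded amount of rewriting the $h_i$, the $g_j$ and the coefficients of $P$ are all regular away from $t=0$ and $t=\infty$, hence Laurent polynomials in $t$ over $F$. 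I expect this to be the main obstacle; the delicate point is the interaction, at a single place, between the simple pole of $D(1-h_i)/(1-h_i)$ and the logarithmic vanishing or blow-up of $\log(h_i)$, which is what makes the induction on log-degree close, and Lemma~\ref{pole-indep} is built around exactly that.

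Finally, with all the rational data now Laurent in $t$, a grading argument by powers of $t$ — respected by $D$ since $Dt^m=m(Du)t^m$ and $DF\subseteq F$ — together with $\log(t)=u$, with \eqref{log-add} (so that the logarithm of a Laurent polynomial in $t$ over $F$ is, modulo a constant, a logarithm of an element of $F$ plus a multiple of $u$ plus finitely many terms $\log(1-t/\gamma)$, $\gamma\in F$), and with one more appeal to Lemma~\ref{pole-indep} to dispose of these last terms, forces the genuinely $t$-dependent contributions to cancel and leaves
\[
f\;=\;D\tilde E\;+\;\sum_i\tilde d_i\,\frac{D(1-\tilde h_i)}{1-\tilde h_i}\,\log(\tilde h_i)^{k_i}
\]
with all $\tilde h_i\in F$ and $\tilde E$ elementary over $F$. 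As noted above, this is an integral with polylog terms defined over an algebraic extension of the original $F$, which is what was wanted.
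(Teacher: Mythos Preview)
Your outline has the right spirit --- descend on log-degree and invoke Lemma~\ref{pole-indep} --- but there is a genuine gap at the point you call ``the main obstacle'', and it is not the obstacle you describe.

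First, you organize by monomials in the $\log(h_i)$, but these need not be algebraically independent over $K$; comparing coefficients of such monomials is not legitimate. The paper fixes this by passing to a multiplicatively independent family $\psi_j$ generating (modulo $\bar F_{*}$) the group spanned by the $h_i$, the $1-h_i$, and the arguments of logarithms in $E$; then $\log(\psi_j)$ are algebraically independent (Lemmas~\ref{pole-indep0} and~\ref{log-lin}) and one can legitimately expand in monomials $l_\alpha=\prod\log(\psi_j)^{\alpha_j}$.

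Second, your conclusion from Lemma~\ref{pole-indep} is wrong. The lemma does not say that non-special places ``do not intervene'' in the $h_i$; the $h_i$ are given data and cannot be ``rewritten'' to be Laurent polynomials (and even if they had no normal poles, $K$ is only algebraic over $F(t)$, so regularity away from $0,\infty$ does not imply membership in $F[t,t^{-1}]$). What the lemma actually kills are the coefficients $c_{\alpha,j}$ of $D\psi_j/\psi_j$ in the expansion by $l_\alpha$: inductively one gets $c_{\alpha,j}\in\bar N$ and then Lemma~\ref{pole-indep} forces $c_{\alpha,j}=0$, whence $s_\alpha\in\bar N$ (here it matters that $\theta$ is exponential, so $Ds_\alpha\in\bar N\Rightarrow s_\alpha\in\bar N$). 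At $\alpha=0$ this leaves
\[
f=g_F+\sum_i d_i\,\frac{Dv_i}{v_i}\,\log(u_i)^{k_i}+Ds_0,
\]
where $h_i=u_i\prod\psi_j^{n_{i,j}}$ and $1-h_i=v_i\prod\psi_j^{m_{i,j}}$ with $u_i,v_i$ of the form (algebraic over $F$)$\cdot\theta^{\,l}$.

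Third --- and this is the idea your ``grading by powers of $t$'' does not supply --- the reason the surviving terms are genuine polylog terms with arguments in $\bar F$ is that the $\psi_j$ were normalized to take value $1$ at a fixed place $p$ over $\theta=0$. Then $u_i$ and $v_i$ are the leading coefficients of $h_i$ and $1-h_i$ at $p$, so one of four things happens: $v_i=1-u_i$ (a polylog with argument algebraic over $F$), $v_i=-u_i$ (giving $D(\log(u_i)^{k_i+1})/(k_i+1)$, hence elementary), or $u_i=1$ or $v_i=1$ (the term vanishes). Without this normalization-at-a-place step there is no mechanism by which the pair $(u_i,v_i)$ is forced back into the form $(\tilde h_i,1-\tilde h_i)$, and your final paragraph does not provide one.
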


\begin{theorem}\label{prim-case}
Let $f\in K$, $f$ has integral with dilog terms defined
over $K$.  If $\theta$ is a primitive, $K$ is algebraic over $F(\theta)$,
$F$ and $K$ have the same constants,
then $f$ has integral with dilog terms defined over an
algebraic extension of $F$.
\end{theorem}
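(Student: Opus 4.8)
I would begin by disposing of the trivial case: if $\theta$ is algebraic over $F$ then $K$ is algebraic over $F$ and there is nothing to prove, so assume $\theta$ is transcendental over $F$. Then $K$ is a finite extension of the rational function field $F(\theta)$, i.e.\ the function field of an algebraic curve over $F$ on which $\theta$ has poles only over the place $\theta=\infty$; since an algebraic extension of $F$ is allowed in the conclusion, we may silently replace $F$ by a finite extension whenever convenient — to split the curve's constant field, to make the residue fields at the places we examine generated over $F$, or to absorb ramification over $\theta=\infty$. Write the hypothesis, in the extension $L$, as
$$
f \;=\; DE \;+\; \sum_i d_i\,\frac{D(1-h_i)}{1-h_i}\,\log(h_i),\qquad h_i\in K ,
$$
with $E$ elementary over $K$, and note that only the \emph{arguments} $h_i$ of the dilogarithmic terms need to be brought into an algebraic extension of $F$; $E$ is allowed to stay elementary over $K$. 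So the whole problem is to rewrite $\sum_i d_i\,\frac{D(1-h_i)}{1-h_i}\log(h_i)$ as $\sum_i d_i'\,\frac{D(1-h_i')}{1-h_i'}\log(h_i')$ with every $h_i'$ algebraic over $F$, modulo the derivative of something elementary over $K$.

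Next I would normalize the data. By Lemma~\ref{log-int} and the remark preceding Theorem~\ref{exp-case}, $E$ may be taken to be a polynomial in the finitely many logarithms that occur, with coefficients in $K$, plus a constant-linear combination of logarithms of elements of $K$; Lemma~\ref{log-lin} lets us further assume those logarithms are algebraically independent over $K$, and Lemma~\ref{mult-free} guarantees that the multiplicative groups generated by the $h_i$ and by the $1-h_i$ are free abelian modulo $\bar F$, which keeps the subsequent bookkeeping finite. It is also convenient at this stage to invoke the abstract reflection and inversion relations for the dilogarithm — the differential-algebraic forms of ``$\Li_2(z)+\Li_2(1-z)$'' and ``$\Li_2(z)+\Li_2(1/z)$'' being elementary, both instances of Baddoura's identity — in order to replace the $h_i$ by a normalized set of representatives for the configuration they define.

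The heart of the proof is a descent along the places of the curve, with Baddoura's abstract dilogarithm identity from \cite{Bad94} as the essential tool: it is the differential-algebraic form of the five-term (and reflection) relation, and it rewrites a dilogarithmic term whose argument admits a nontrivial multiplicative or rational relation as a sum of dilogarithmic terms with strictly ``simpler'' arguments, modulo an \emph{explicit} correction that is a polynomial in logarithms of elements of $K$. Concretely, I would run an induction on a complexity measure of the family $(h_i)$ — built, say, from the degrees of the $h_i$ and the $1-h_i$ as functions on the curve, or from the supports of their divisors relative to $\theta$ — picking a place $P$ at which this complexity is visible and expanding every $h_i$, every $1-h_i$ and $f$ in a Puiseux series in a local parameter at $P$. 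Since $f\in F$, the left-hand side is trivial in that parameter, so matching the principal parts and residues of the logarithmic and dilogarithmic differentials at $P$ produces relations among the leading Puiseux coefficients; Baddoura's identity then lets us factor the ``local at $P$'' part out of each $h_i$, trading it for dilogarithmic terms with simpler arguments plus a polynomial in logarithms of elements of $K$, which is absorbed into $E$. The surviving equation has the same shape with the complexity of $(h_i)$ strictly decreased, and iterating over the finitely many places of positive complexity terminates with every $h_i$ in $\bar F$, hence algebraic over $F$.

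The hard part will be precisely this dilogarithmic descent step. There is no product formula for the dilogarithm as there is for the logarithm, so everything that makes the step work is packed into Baddoura's identity, and one must verify it is available in the generality needed: that the elements arising really do behave as dilogarithm arguments in the sense of clause~4 of the definition of a dilogarithmic extension (the compatibility $Du=(Dv)u$ survives passage to the curve and to finite extensions of $F$), and — crucially — that the identity's corrections are of the allowed shape, namely polynomials in logarithms of elements of $K$ together with constant combinations of such logarithms, with no term $\log(h)^k$, $k>1$, ever being produced. This last point is where the restriction to \emph{dilog} terms (rather than general polylog terms) is used: the reflection and five-term relations keep us at the dilogarithmic level, whereas for the higher $I_m$ the analogous relations are not available in a form that would close the induction — which is exactly why the primitive case is settled here only for the dilogarithm. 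Two further technical matters must be handled: the ``no new constants'' hypothesis of Lemmas~\ref{log-lin} and \ref{log-int} must be kept intact throughout, which is arranged as in the preliminaries by adjoining a transcendental logarithm only when none is already present; and the complexity measure must be checked to be well-founded so that the induction really stops. Ramification over $\theta=\infty$, which forces honest fractional Puiseux expansions, and the simultaneous treatment of the places over $\theta=\infty$ and over the finite places $\theta=c$, $c\in\bar F$, are routine once $F$ has been enlarged as permitted.
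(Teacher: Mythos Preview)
Your proposal has a genuine gap. First, a factual slip: in this theorem $f\in K$, not $f\in F$ (you have conflated it with Theorem~\ref{exp-case}), so the sentence ``Since $f\in F$, the left-hand side is trivial in that parameter'' is false, and with it the mechanism you rely on for extracting relations --- matching principal parts of the right-hand side against a trivial left-hand side at a place $P$ --- does not get off the ground. Second, and more seriously, the iterative descent is a hope rather than a proof: you never define the complexity measure, and the assertion that ``Baddoura's identity then lets us factor the `local at $P$' part out of each $h_i$'' with strictly smaller complexity is precisely the content to be established. The five-term relation rewrites one dilogarithm as a combination of dilogarithms with rationally related arguments; there is no evident reason this lowers any divisor-support complexity one place at a time, and you offer none. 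Invoking reflection and inversion ``to replace the $h_i$ by a normalized set of representatives'' is likewise unsubstantiated.

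The paper's argument is not iterative. It antisymmetrizes the dilog terms and passes to the tensor square $\bar W\otimes\bar W$ of a space of logarithms via $\Psi(t\otimes s)=(Dt)s$, so that the whole claim reduces to showing a specific tensor $S_1$ is \emph{symmetric} (values of $\Psi$ on symmetric tensors are derivatives of elementary functions). A single normalization at one fixed place (Lemma~\ref{prep-ext}) produces algebraic $u_i,v_i$, an injective linear map $\iota$ into an abstract space $X\oplus V$, and combinatorial data $\delta_a,\beta_{a,b}$; Lemma~\ref{tens-eq} then computes $(\iota\otimes\iota)(S_1)$ modulo symmetric tensors as a combination of $M_{a,b}-M_{b,a}$, and a projection together with Lemma~\ref{pole-indep0} forces all coefficients to vanish. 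Thus Baddoura's identity enters only in the abstract linear-algebraic form of Lemma~\ref{tens-eq}, not as a local factorization device at a moving place $P$; and the hypothesis $f\in K$ is never used to trivialize a Puiseux expansion.
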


\begin{theorem}
Let $f\in F$, $f$ has integral in a dilogarithmic extension of
$F$ then $f$ has integral with dilogarithmic terms defined over an
algebraic extension of $F$.
\end{theorem}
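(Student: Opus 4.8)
The plan is to argue by induction on the length $n$ of a dilogarithmic tower $L = F(\theta_1,\dots,\theta_n)\supseteq F$ carrying an element $g$ with $Dg = f$; such a pair exists by hypothesis. If $n = 0$ then $g\in F$, so $f = Dg$ already exhibits $f$ as an integral with an (empty) family of dilog terms over $F$, and there is nothing to prove. For the inductive step set $\hat F = F(\theta_1)$. Then $L$ is a dilogarithmic extension of $\hat F$ of length $n-1$ while $f\in F\subseteq\hat F$, so the induction hypothesis applied over $\hat F$ produces an algebraic extension $\tilde F$ of $\hat F$ over which $f$ has an integral with dilog terms. It remains to push this expression down past $\theta_1$, and I do this according to the type of $\theta_1$.

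Before the case split I normalise constants. Since $\tilde F$ is obtained from $F$ by one algebraic, exponential, logarithmic or dilogarithm generator followed by an algebraic extension, its constant field is an algebraic extension of that of $F$ (transcendental exponential and primitive generators --- the latter including dilogarithm generators --- add no new constants, and algebraic generators enlarge the constants only algebraically). Replacing $F$ by its compositum with this algebraic constant extension is harmless: it only enlarges the algebraic extension of $F$ appearing in the conclusion, and it leaves $\theta_1$ a generator of the same kind, so I may assume $F$ and $\tilde F$ share the same field of constants. In the case split I may also take $\theta_1$ transcendental over $F$ in the exponential, logarithmic and dilogarithm cases, since an algebraic $\theta_1$ is covered by the algebraic case.

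If $\theta_1$ is algebraic over $F$, then $\tilde F$ is already algebraic over $F(\theta_1)$, hence over $F$, and we are done. If $\theta_1$ is an exponential over $F$, I apply Theorem \ref{exp-case} with base field $F$, exponential $\theta=\theta_1$, and $K=\tilde F$: all hypotheses hold ($f\in F$; $f$ has an integral with polylog terms --- in fact dilog terms --- over $\tilde F$; $\tilde F$ is algebraic over $F(\theta_1)$; $F$ and $\tilde F$ have equal constant fields), giving $f$ an integral with polylog terms over an algebraic extension of $F$; since the terms fed in are dilogarithm terms and the construction in Theorem \ref{exp-case} does not raise polylogarithm order, the output terms are again dilogarithm terms. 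If $\theta_1$ is a logarithm or a dilogarithm generator over $F$ (clauses 3 or 4), the key point is that $D\theta_1\in F$: for a logarithm this is immediate, and for clause 4 it is forced because the data $u$ and $v=\log u$ lie in the field below $\theta_1$, which is $F$, so that $D\theta_1=\frac{D(u-1)}{u-1}v^k\in F$. Thus $\theta_1$ is a primitive over $F$, and Theorem \ref{prim-case} with $\theta=\theta_1$, $K=\tilde F$ gives $f$ an integral with dilog terms over an algebraic extension of $F$. This exhausts the cases and closes the induction.

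I expect this theorem to be essentially formal once Theorems \ref{exp-case} and \ref{prim-case} are in hand: the real work lies there, and the present argument merely chains them along the tower one generator at a time. The only two points that need care are the ones flagged above --- keeping the ``same constants'' hypotheses of those theorems valid all the way up the tower, handled by the constant-field normalisation, and the observation that a logarithmic or a dilogarithm generator sitting directly over $F$ has derivative in $F$, so that the primitive case (Theorem \ref{prim-case}) absorbs the dilogarithm generators and not only the logarithmic ones. Of these, the second is the crux: without it the induction would be missing precisely the case that makes the extension ``dilogarithmic'' rather than elementary.
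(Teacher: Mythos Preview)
Your argument is correct and follows the same inductive skeleton as the paper: descend the tower one generator at a time, invoking Theorem~\ref{exp-case} for exponentials and Theorem~\ref{prim-case} for primitives, with logarithm and dilogarithm generators absorbed into the primitive case because their derivatives lie in the field below. The one substantive difference is how the ``same constants'' hypothesis of those theorems is secured. The paper does this once at the outset: it recasts the existence of an integral in a dilogarithmic extension as a system of differential equations and appeals (via \cite{SSC}) to a Kolchin-type result to replace the given extension by one whose constant field coincides with that of $F$; the induction then runs inside a tower with fixed constants throughout. You instead normalise step by step, using the elementary fact that a transcendental primitive or exponential adds no new constants and an algebraic extension adds only algebraic ones, and then enlarge $F$ by those algebraic constants before applying Theorem~\ref{exp-case} or~\ref{prim-case}. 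Your route avoids the differential-ideal machinery entirely, at the price of re-normalising at every stage; the paper's route front-loads the work but keeps the induction cleaner. One small remark: your assertion that ``the construction in Theorem~\ref{exp-case} does not raise polylogarithm order'' is not part of the statement of that theorem but is a feature of its proof (the exponents $k_i$ in the polylog terms pass through unchanged); it is true, but you should say explicitly that you are invoking the proof rather than the bare statement.
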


\begin{proof}
Let $L$ be a dilogarithmic extension such that $f$ has integral in $L$.
First note that we can assume that $L$ has the same constants
as $F$.  Namely, $L = F(\theta_1, \dots, \theta_n)$
where each $\theta_i$ satisfies differential equation over
$F(\theta_1, \dots, \theta_{i-1})$ (note that algebraic
equation is treated as differential equation of order $0$).
In other words,
$L = F(\theta_1, \dots, \theta_n)$ is a dilogarithmic extension of
$F$ if and only if $\theta_1, \dots, \theta_n$ satisfy
appropriate system of differential equations.  $f = \gamma'$
is also a differential equation.  Like in Lemma 2.1 \cite{SSC}
clearing denominators
we convert system of differential equations to a differential
ideal $I$ plus an inequality $g \ne 0$ (which is responsible
for non-vanishing of denominators) and use result of Kolchin
which says that differential ideal $I$ which has zero in some
extension satisfying $g \ne 0$ has zero in extension having
constants algebraic over constants of $F$ and satisfying
$g \ne 0$.  Solution clearly gives us dilogarithmic extension
with constants algebraic over constants of $F$ such that
$f = \gamma'$ has solution.  Adding new constants to $F$
we get algebraic extension which does not affect our claim.
So we can assume that $L$ and $F$ have the same constants.

By definition of dilogarithmic extension there exits tower
$F = F_0 \subset F_1 \subset \dots \subset F_n = L$ such that
$F_{k+1}$ is algebraic over $F_k(\eta_k)$, each $\eta_k$
is either primitive over $F_k$ or an exponential over $F_k$.
By assumption $f$ has integral with dilogarithmic terms defined over
$F_n$.  Using induction and theorems \ref{exp-case} and \ref{prim-case}
we see that $f$ has integral with dilogarithmic terms defined over $F$.
\end{proof}

\section{Extension by exponential}

\begin{lemma}\label{pole-indep0}
Let $F$ be a differential field, $K$ be differential
field algebraic over $F(\theta)$, $\theta$ be an
exponential of a primitive or a primitive over $F$.  Assume that
$F$ is algebraically closed in $K$, $F$ and $K$ have the same constants,
$\theta$ is transcendental and $\psi_i \in K$,
$i=1,\dots,n$ are such that $\psi_i$
are
multiplicatively independent modulo $F_{*}$.
When $\theta$ is exponential of a primitive we assume that
$\theta$ and $\psi_i$ are
multiplicatively independent modulo $F_{*}$.
If $a_i \in F$, $Da_i = 0$, $s\in K$,
$$
\sum a_i \frac{D\psi_i}{\psi_i} + Ds \in F,
$$
then $a_i = 0$ for all $i$.
\end{lemma}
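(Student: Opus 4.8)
The plan is to localise: turn the identity into a statement about residues of logarithmic forms on a curve, and use that a primitive or an exponential of a primitive has essentially no ``bad'' places for the derivation.

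\smallskip

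I would begin by normalising the constants. The $a_i$ lie in the (characteristic $0$) constant field, so the subgroup they generate is free abelian; choosing a $\mathbb Z$-basis $\gamma_1,\dots,\gamma_r$, writing $a_i=\sum_j n_{ij}\gamma_j$ with $n_{ij}\in\mathbb Z$ and putting $\phi_j=\prod_i\psi_i^{n_{ij}}$, one gets $\sum_i a_i\frac{D\psi_i}{\psi_i}=\sum_j\gamma_j\frac{D\phi_j}{\phi_j}$; as in the proof of Lemma \ref{log-lin}, the matrix $(n_{ij})$ has rank $r$, so the $\phi_j$ are again multiplicatively independent modulo $F_{*}$ (and independent of $\theta$ in the exponential case), while the $\gamma_j$ are $\mathbb Q$-linearly independent. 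Hence it suffices to prove the lemma when the $a_i$ themselves are $\mathbb Q$-linearly independent; in that case the conclusion $a_i=0$ is equivalent to $n=0$, so I may assume $n\ge1$ and seek a contradiction.

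\smallskip

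Next I would pass to the function field picture. Replacing $K$ by the subfield generated over $F$ by $\theta$, the $\psi_i$ and $s$, I may assume $K$ is finitely generated over $F$, so it is the function field of a curve over $F$; a place $v$ of $K/F$ is trivial on $F$, whence $v(g)\ge0$ for every $g\in F$. Since $F$ is algebraically closed in $K$, the divisor map on this curve has kernel exactly $F_{*}$, so multiplicative independence of the $\psi_i$ modulo $F_{*}$ is equivalent to $\mathbb Z$-linear independence of $\divisor(\psi_1),\dots,\divisor(\psi_n)$. The key local input is that $\theta$ has ``no special polynomials'': if $p\in F[\theta]$ is monic irreducible with $p\ne\theta$, then $p\nmid Dp$ --- a degree count when $\theta$ is primitive, and $Dp\equiv r\pmod p$ with $0<\deg r<\deg p$ when $\theta$ is an exponential of a primitive. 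Consequently, at each place $v$ of $K/F$ lying over a place $p(\theta)$ of $F(\theta)$ with $p\ne\theta,\infty$ and unramified in $K$ --- the \emph{good} places, which are all but finitely many --- the derivation $D$ preserves $\mathcal O_v$ and a uniformizer $t$ may be taken with $Dt$ a unit, so that for any $\psi\in K^{*}$, $\frac{D\psi}{\psi}=\ord_v(\psi)\frac{Dt}{t}+\frac{Du}{u}$ has at worst a simple pole at $v$, with residue a \emph{nonzero} multiple of $\ord_v(\psi)$.

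\smallskip

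Now the pole argument. Write $G=\sum a_i\frac{D\psi_i}{\psi_i}+Ds\in F$; then $v(G)\ge0$ at every place of $K/F$. If $s$ had a pole of order $k\ge1$ at a good place $v$, then $v(Ds)=-k-1$, strictly below the order $\ge-1$ of each $\frac{D\psi_i}{\psi_i}$ at $v$, so $v(G)=-k-1<0$, a contradiction; hence $s$, and therefore $Ds$, is regular at all good places. Then at a good place $v$ the residue of $G$ vanishes and equals $(\text{nonzero constant})\cdot\sum_i a_i\ord_v(\psi_i)$, so $\sum_i a_i\ord_v(\psi_i)=0$; as the $a_i$ are $\mathbb Q$-linearly independent, $\ord_v(\psi_i)=0$ for every good $v$ and every $i$. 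Thus each $\divisor(\psi_i)$ (and $\divisor(\theta)$, in the exponential case) is supported on the finite set of bad places --- those over $\theta=0$, over $\theta=\infty$, or ramified over $F(\theta)$ --- which, together with $\mathbb Z$-linear independence of these divisors, is the structure I would push against. To finish, I would take norms down to $F(\theta)$ (the norm of a logarithmic derivative is the logarithmic derivative of the norm, and $\Tr$ commutes with $D$), obtaining an identity of the same shape over the rational function field $F(\theta)$; rerunning the above there, where partial fractions are available and the only candidate special polynomial is $\theta$ itself, forces each $\Tr$-norm $N_{K/F(\theta)}(\psi_i)$ to have trivial divisor, hence to lie in $F_{*}$ --- and in the exponential case the hypothesis that $\theta$ is also multiplicatively independent of the $\psi_i$ modulo $F_{*}$ is precisely what is needed to exclude the remaining possibility that some $\psi_i$ is a power of $\theta$ times an element of $F$. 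The step I expect to be the main obstacle is exactly this last passage: converting ``$\divisor(\psi_i)$ supported on the bad places'' (or ``the norm is constant'') back into ``$\psi_i\in F_{*}$'', which requires the precise local behaviour of $D$ at the places over $\theta=0,\infty$ (where $Dt/t\in F$) and a separate, heavier treatment of the ramified places; the remainder of the argument is routine.
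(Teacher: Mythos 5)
Your reduction to $\mathbb{Q}$-linearly independent $a_i$ and your residue computation at the good (non-special, unramified) places are both sound, and they do show that each $\divisor(\psi_i)$ is supported on the finitely many special or ramified places. (The ramified non-special places are not the real problem: one can absorb them by looking at the coefficient of order $-r$ of the Puiseux expansion, exactly as the paper does later in the proof of Lemma \ref{pole-indep}.) The genuine gap is the endgame that you yourself flag. At the special places the logarithmic derivatives are \emph{regular} --- for $\theta$ primitive, with $t=\theta^{-1}$ one has $Dt/t=-t\,D\theta$, and for $\theta$ an exponential of a primitive $Dt/t=\pm D\theta/\theta\in F$ at the places over $\theta=0,\infty$ --- so no residue or leading-coefficient computation there puts any constraint on $\ord_v(\psi_i)$. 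Hence purely local analysis cannot exclude $\psi_i$ whose divisors live entirely over the special fibre, and such $\psi_i$ need not be multiplicatively dependent on $\theta$ modulo $F_{*}$: take $K=F(\theta,y)$ with $y^2=\theta^2+1$ and $\theta$ primitive; then $\psi=\theta+y$ has divisor $Q_2-Q_1$ supported on the two places over $\theta=\infty$, yet no power of $\psi$ (times a power of $\theta$) lies in $F_{*}$. Your norm step does not recover this either, since $N_{K/F(\theta)}(\theta+y)=-1\in F_{*}$ while $\theta+y\notin F_{*}$; the norm map kills exactly the divisor classes you still need to control. In the exponential case the joint independence hypothesis on $\theta$ and the $\psi_i$ is likewise not ``precisely what is needed'': it only rules out one of the many divisor configurations supported over $\theta=0,\infty$.

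What is missing is a global input, and this is where your route diverges from the paper. The paper's proof of this lemma is entirely global and short: the hypothesis supplies one Liouville-type relation, the defining equation of $\theta$ a second, and since $\trdeg$ of $K$ over $F$ is one, Rosenlicht's Theorem 1 forces a nontrivial constant-linear dependence $\sum c_i\,d\psi_i/\psi_i+dv=0$ in $\Omega_{K/F}$ (with $\theta$ included among the $\psi_i$ in the exponential case); rewriting with a $\mathbb{Q}$-basis of the $c_i$ and applying Rosenlicht's Proposition 4 shows the corresponding power products $t_j$ are algebraic over $F$, hence in $F$ because $F$ is algebraically closed in $K$, contradicting multiplicative independence. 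You would need to import this (or some equivalent global statement about principal divisors supported over the special fibre) to close your argument; the local half you wrote is correct but strictly weaker than the lemma.
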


\begin{proof}
The result follows from results of Rosenlicht \cite{Ros}.
Namely, when $\theta$ is a primitive we
we have two equations, one from assumption and
the second $D\theta \in F$ from definition of $\theta$,
but the transcendental degree is one, so by Theorem 1
of \cite{Ros} there is differential form
$$
\sum c_i \frac{d\psi_i}{\psi_i} + dv = 0
$$
with constant $c_i$ and $v\in K$.  Choose basis
$\gamma_1,\dots,\gamma_r$ for vector space over rationals
generated by $c_1,\dots,c_n$ so that each $c_i$
can be written as $c_i = \sum \alpha_{i,j}\gamma_j$
with integer $\alpha_{i,j}$.  Write
$t_j = \psi_1^{\alpha_{1,j}}\dots\psi_n^{\alpha_{n,j}}$.
Then
$$
\sum \gamma_j\frac{dt_j}{t_j} + dv = 0.
$$
By \cite{Ros} Proposition 4 all $t_j$ are algebraic
over $F$.  Since $t_j \in K$ and $F$ is algebraically closed in $K$
we have $t_j \in F$, which means that $\psi_i$ are multiplicatively
dependent modulo $F_{*}$.

When $\theta$ is exponential of a primitive argument is similar,
but we need to include $\theta$  in the dependence.
\end{proof}

\begin{lemma}\label{pole-indep}
Let $F$ be a differential field, $K$ be differential
field algebraic over $F(\theta)$, $\theta$ be an
exponential of a primitive or a primitive over $F$.  Assume that
$F$ is algebraically closed in $K$, $F$ and $K$ have the same constants,
$\theta$ is transcendental and $\psi_i \in K$,
$i=1,\dots,n$ are such that $\psi_i$
are
multiplicatively independent modulo $F_{*}$.
When $\theta$ is exponential of a primitive we assume that
$\theta$ and $\psi_i$ are
multiplicatively independent modulo $F_{*}$.
If $a_i \in F$, $s\in K$,
$$
\sum a_i \frac{D\psi_i}{\psi_i} + Ds \in k,
$$
then $a_i = 0$ for all $i$.
\end{lemma}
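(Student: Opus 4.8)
The goal is to reduce Lemma~\ref{pole-indep} to its constant-coefficient special case, Lemma~\ref{pole-indep0}, by a pole count on the curve whose function field is $K$. Suppose not all $a_i$ vanish; discarding the zero ones we may take all $a_i\neq 0$ and, restricting to the finitely many elements that occur, regard $K$ as a function field of transcendence degree one over $F$ with $F$ algebraically closed in it, $\theta$ transcendental, $D\theta\neq 0$ (otherwise $\theta$ is a new constant). Call a place of $K$ (trivial on $F$) \emph{special} if it lies over $\theta=\infty$ (primitive case) or over $\theta=0$ or $\theta=\infty$ (exponential case). The first thing I would record is the local behaviour at a non-special place $P$: if $e_P$ is its ramification index over $F(\theta)$, then $D$ has a pole of order at most $e_P-1$ at $P$, the form $D\psi/\psi$ has a pole of order at most $e_P$ with coefficient of $\tau^{-e_P}$ equal to $\rho_P\,v_P(\psi)$ for a fixed nonzero $\rho_P$, and $D$ strictly raises the pole order of any element with a pole at $P$. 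This rests on the standard remark that a monic irreducible $q\in F[\theta]$ of positive degree is not a constant of $K$ (primitive case), and does not satisfy $Dq/q\in F$ when $q\neq\theta$ (exponential case), because $K$ and $F$ have the same constants.

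From here the argument runs: because the right side $g$ of $\sum_i a_i\frac{D\psi_i}{\psi_i}+Ds=g$ is regular at each non-special $P$ while $D$ over-raises poles, $s$ cannot have a pole at any non-special $P$, so $s$ is integral over $F[\theta]$ (resp.\ over $F[\theta,\theta^{-1}]$); then at each non-special $P$ the element $Ds$ has pole order $<e_P$, hence so does $\sum_i a_i\frac{D\psi_i}{\psi_i}=g-Ds$, and comparing the $\tau^{-e_P}$-coefficients forces
$$\sum_i a_i\,v_P(\psi_i)=0\qquad\text{for every non-special }P.$$
A $\mathrm{GL}_n(\mathbb Z)$ change of basis on the multiplicatively independent family $(\psi_i)$ — which replaces $(a_i)$ by an invertible $\mathbb Z$-linear image and alters $g$ only by an element of $F$ — lets me assume that $\psi_1,\dots,\psi_m$ have $\mathbb Z$-independent divisors away from the special fibre and $\psi_{m+1},\dots,\psi_n$ are units of the relevant affine ring. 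The displayed relations then kill $a_1,\dots,a_m$, and I am left with $\sum_{i>m}a_i\frac{D\psi_i}{\psi_i}+Ds=g$ in which each surviving $\psi_i$ is a unit, still multiplicatively independent modulo $F^{*}$ — and, in the exponential case, still jointly multiplicatively independent with $\theta$, which is itself such a unit and satisfies $D\theta/\theta\in F$.

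The step that remains, and which I expect to be the real difficulty, is to show the surviving $a_i$ vanish once all the $\psi_i$ are units: at the special places the forms $D\psi_i/\psi_i$ are regular, so no residue is available and the crude pole count yields nothing new. This is precisely the situation Lemma~\ref{pole-indep0} is built for — there the extra equation $D\theta\in F$ (resp.\ $D\theta/\theta\in F$) is what makes Rosenlicht's theorem applicable despite transcendence degree one — and I would exploit it here as follows: in the exponential case the hypothesis that $\theta$ itself is among the independent elements, together with the description of the integral closure of $F[\theta,\theta^{-1}]$, should ultimately force $m=n$, contradicting $a_i\neq 0$; in the primitive case one carries out the analogous analysis at the fibre over $\theta=\infty$, passing if necessary to a finite extension in which the unit group becomes visible. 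A clean, uniform treatment of this last point — in particular of the exponential case with its two special fibres — is the part of the proof I would expect to need the most care.
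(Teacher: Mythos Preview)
Your reduction to the ``units'' case is exactly the paper's: show $s$ has no normal poles, read off $\sum_i a_i\,\ord_P(\psi_i)=0$ at every normal place from the $\tau^{-e_P}$-coefficient, change basis so that the surviving $\psi_i$ have trivial normal divisor. The gap is in the last step. Your hope that in the exponential case the unit group of the integral closure of $F[\theta,\theta^{-1}]$ is just $F_*\cdot\theta^{\mathbb Z}$, and in the primitive case the unit group of the integral closure of $F[\theta]$ is just $F_*$, is false for algebraic $K$. For instance, with $K=F(\theta,y)$, $y^2=1+\theta$ (exponential case), the fibre over $\theta=0$ splits into two places $P_0,P_0'$ while $\theta=\infty$ is ramified; the curve has genus $0$, so $P_0-P_0'$ is principal, and a generator $u$ is a unit multiplicatively independent of $\theta$ modulo $F_*$. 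An analogous quadratic twist produces non-trivial units over the single special fibre in the primitive case. So ``$m=n$'' cannot be forced by unit-group structure alone, and Lemma~\ref{pole-indep0} does not apply directly because your $a_i$ are not constants of $D$.

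The device you are missing is the paper's auxiliary derivation: define $X$ on $K$ by $X|_F=0$ and $X\theta=D\theta$ (so $X$ and $D$ share the same special places). At each normal place the negative part of the Puiseux expansion of $\sum a_i\,D\psi_i/\psi_i+Ds$ coincides with that of $D(\lambda)\bigl(\sum a_i\,\partial_\lambda\psi_i/\psi_i+\partial_\lambda s\bigr)$, hence with the negative part of $\sum a_i\,X\psi_i/\psi_i+Xs$ up to the unit $D(\lambda)$; thus the hypothesis forces $\sum a_i\,X\psi_i/\psi_i+Xs$ to be regular at all normal places. At special places $X\psi_i/\psi_i$ is regular, and one checks (trivially when $\theta$ is exponential since then $s\in F$, by a short explicit expansion when $\theta$ is primitive) that $Xs$ is regular there too. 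Hence $\sum a_i\,X\psi_i/\psi_i+Xs$ lies in $F$, and since the $a_i$ are $X$-constants, Lemma~\ref{pole-indep0} now applies verbatim to give $a_i=0$. This passage to $X$ is what bridges the gap between non-constant $a_i$ and Rosenlicht's theorem; your proposal does not contain it.
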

Remark: This differs from Lemma \ref{pole-indep0} because
we dropped assumption that $a_i$ are constants.

\begin{proof}  We may assume that $K$ is finitely generated over $F(\theta)$.
When derivation on $F$ is trivial the result is just Lemma \ref{pole-indep0}.

To handle general derivative on $F$
first note that if $s$ has pole in normal place of ramification
index $r$, than $Ds$ has order less than $-r$, while
$\frac{D\psi_i}{\psi_i}$ has order at least $-r$
(see \cite{Bro:ele}, Lemma 1.7).
This means that pole of $Ds$ and poles of $\frac{D\psi_i}{\psi_i}$
can not cancel.  Consequently, since sum is in $F$ we see
that $s$ has no normal poles.

$\frac{D\psi_i}{\psi_i}$ is regular at special
places (see \cite{Bro:ele}, Lemma 1.8).
If $\theta$ is an exponential of a primitive, and $s$ had
pole at a special place, then also $Ds$ would have
pole at special case, which is impossible since
the sum is in $F$.  In other words, when $\theta$ is
an exponential of a primitive, then $s$ has no poles, so $s$ is
algebraic over $F$.  But $F$ is algebraically closed in
$K$ so $s \in F$.

Consider now mapping which maps $\psi_i$ to vector of
multiplicities of zeros and poles at normal places.
This mapping extends by linearity to mapping $\iota$ on
vector space over ${\mathbb Q}$ spanned by $\frac{D\psi_i}{\psi_i}$.
If $a_i\in {\mathbb Z}$,
$$
\iota(\sum a_i \frac{D\psi_i}{\psi_i})=0,
$$
then $\psi_i^{a_i}$ is a function with no normal poles or zeros.
By replacing $\psi_i$ by appropriate power products without
loss of generality we can assume that $\iota(\frac{D\psi_i}{\psi_i})$ for 
$i=1,\dots,l$ are linearly independent and
$\iota(\frac{D\psi_i}{\psi_i}) = 0$
for $i=l+1,\dots, n$.  $\iota$ takes values in ${\mathbb Q}^A$
where $A$ is set of normal zeros and poles of $\{\psi_i\}$.
We can extend $\iota$ by linearity to mapping from linear combinations
of $\frac{D\psi_i}{\psi_i}$ with coefficients in $F$ into
$F^A$.  Of course
$\iota(\frac{D\psi_i}{\psi_i})$ for $i=1,\dots,l$ remain linearly independent
over $F$.  However, we can compute $\iota$ from
coefficients of Puiseaux expansions of
$$
\sum a_i \frac{D\psi_i}{\psi_i}
$$
at places in $A$.  Namely, coefficient of order $-r$,
where $r$ is ramification index of the place is
order of zero of $\psi_i$ times $a_i$.  Moreover,
since $s$ has no normal poles, $Ds$ has order
bigger than $-r$, so
$$
\sum a_i \frac{D\psi_i}{\psi_i} + Ds \in F
$$
means that
$$
\iota(\sum a_i \frac{D\psi_i}{\psi_i}) = 0
$$
so $a_i=0$ for $i=1,\dots, l$.  In other words, to prove the
lemma it remains to handle case when all $\psi_i$ have no
normal zeros or poles.

Consider now Puiseaux expansion of $\frac{D\psi_i}{\psi_i}$
at a normal place $p$ of ramification index $r$.   Denoting by $\lambda$ parameter of expansion
we have
$$
\psi_i = c_0 + c_1\lambda^{1/r} + c_2\lambda^{2/r} + \dots
$$
$$
D\psi_i = D(\lambda)\partial_\lambda\psi_i + 
D(c_0) + D(c_1)\lambda^{1/r} + \dots.
$$
The second part contains only terms of nonnegative oder, so
$D\psi_i$ has the same terms of negative oder as
$D(\lambda)\partial_\lambda\psi_i$.  Since $s$ has nonnegative
order at $p$ the same argument shows that terms of negative order in $Ds$
are the same as terms of negative order of $D(\lambda)\partial_\lambda s$.
  At normal place $p$
$D(\lambda)$ has order $0$ so negative part of Puiseaux expansion
at $p$ of
$$
\sum a_i \frac{D\psi_i}{\psi_i} + Ds
$$
is the same as of
$$
D(\lambda) \left( \sum a_i \frac{\partial_\lambda\psi_i}{\psi_i}
+ \partial_\lambda s\right).
$$
In particular terms of negative order vanish if and only if
terms of negative order in expansion of
$$
\sum a_i \frac{\partial_\lambda\psi_i}{\psi_i}
+ \partial_\lambda s
$$
vanish.  But the last expression does not depend on derivative
$D$.  Let $X$ be derivative on $K$ such that $X$ is zero on
$F$ and $X\theta = D\theta$.  Note that for $D$ and $X$
we have the same set of special places.
By reasoning above,
$$
\sum a_i \frac{D\psi_i}{\psi_i} + Ds \in F
$$
implies that
$$
\sum a_i \frac{X\psi_i}{\psi_i} + Xs
$$
has nonnegative order when $p$ is normal place.
For special places $\frac{X\psi_i}{\psi_i}$ has nonnegative
order.  When $\theta$ is exponential of a primitive we observed
that $s \in k$, so $Xs = 0$ and consequently also has nonnegative order.
So in all places
$$
\sum a_i \frac{X\psi_i}{\psi_i} + Xs
$$
has nonnegative order, so is algebraic over $F$ so in $F$, which by the
Lemma \ref{pole-indep0} means that $a_i = 0$.

It remains to handle case when $\theta$ is a primitive.  Then
we have $D\theta = \eta \in F$ and we can use $\theta^{-1/r}$ as parameter
in Puiseaux expansion at special places.  We have
$$
s = \sum c_i\theta^{-i/r}
$$
$$
Ds = \sum Dc_i\theta^{-i/r} - \sum \frac{i}{r}c_i\eta\theta^{-(i+r)/r}
$$
$$
= \sum Dc_i\theta^{-i/r} - \sum \frac{i-r}{r}c_{i-r}\eta\theta^{-i/r}
$$
$Ds$ has nonnegative order at special places, so all terms above
with negative $i$ vanish.  When $i<0$ is lowest order term such
that $c_i \ne 0$, then $Dc_i\theta^{-i/r}$ can not cancel with other
terms so $Dc_i = 0$.  Now, $i<-r$ implies that
$$
Dc_{i+r} - \frac{i}{r}c_i\eta = 0
$$
that is
$$
D\theta = \eta = D\frac{-c_{i+r}}{c_i}
$$
but this is impossible since $\theta$ is transcendental and $F$ and
$K$ have the same constants.  So $-r \leq i < 0$.
Now, similarly like $Dc_i$ we see that $Dc_j = 0$ for $j<0$.
We can do the same calculation for $Xs$ and we see that
all terms of negative order in $Xs$ vanish.  So, we can finish like in
exponential case.
\end{proof}



Now we are ready to prove Theorem \ref{exp-case}.

\begin{proof}
When $\theta$ is algebraic over $F$ there is nothing to prove,
so we may assume that $\theta$ is transcendental.
Let $\bar F$ be algebraic closure of $F$.  By assumption we have
$$
f = g_F + DE + \sum d_i\frac{D(1 - h_i)}{1 - h_i}\log(h_i)^{k_i}
$$
where $g_F$ is sum of polylog terms with arguments in $\bar F$,
$E$ denotes elementary part and $h_i \in K - \bar F$ are arguments of
polylogs outside $\bar F$.  Consider group $G$ generated
by arguments of
logarithms in elementary part, $h_i$ and $1 - h_i$ modulo $\bar F$.  By
Lemma \ref{mult-free} $G$ is a free abelian group.
Let $\alpha_j \in K$ be generators of $G$.  In particular they
are multiplicatively independent over $\bar F$.
Let $p$ be a place
of $K$ over $0$.  We may assume that each $\alpha_j = \theta^k\beta_j$
where $\beta_j$ has nonzero value at $p$ and $k$ depend on $j$.
Namely, replacing $\theta$
by a fractional power we may assume that order of $\theta$ at $p$
divides orders of all $\alpha_j$.  
Next, we normalize $\beta_j$
so that each has value $1$ at $p$ and chose a multiplicatively independent
family $\psi_j$ which generate the same subgroup of $(\bar FK)_{*}$ as
$\beta_j$ (again, we can do this due to Lemma \ref{mult-free}).
Note that due to Lemma \ref{pole-indep0}  $\log(\psi_j)$ are linearly
independent over constants modulo $\bar F$ so by
and \ref{log-lin} they
are algebraically independent over $K$.
Then each $h_i$ can be written
as
$$
h_i = u_i\prod \psi_j^{n_{i,j}}
$$
where
$n_{i,j}$ are integers and
$u_i = w_i\theta^{l_i}$ with $w_i$ algebraic over $F$ and integer $l_j$
so
$$
\log(h_i) = \sum n_{i,j}\log(\psi_j) + \log(u_i).
$$
Similarly
$$
\log(1 - h_i) = \sum m_{i,j}\log(\psi_j) + \log(v_i)
$$
where $m_{i,j}$ are integers and $v_i = r_i\theta^{o_i}$
with $r_i$ algebraic over $F$ and integer $o_i$.
After rewriting $h_i$, $\log(1 - h_i)$ as above from polylog
terms we get polynomial in $\log(\psi_j)$, with coefficients
in $N = F(\Delta \cup \{\eta\}\cup \{w_i, \log(w_i), r_i, \log(r_i)\})$ where
$\Delta$ is set of logarithms needed to express $g_F$
and $\eta = \frac{D\theta}{\theta}$.
We also rewrite logarithms in elementary
part like above.  By Lemma \ref{log-int} we see that
now $E$ is a polynomial in $\log(\psi_j)$, $j>0$ with coefficients in 
$M = NK$.  Put $l_\alpha = \prod\log(\psi_j)^{\alpha_j}$.
We have
$$
E = \sum s_\alpha l_\alpha
$$
with $s_\alpha \in M$.
Expanding formula for $f$
in terms of $l_\alpha$ we get system of equations
$$
\sum c_{\alpha, j}\frac{D(\psi_j)}{\psi_j} + Ds_\alpha + c_{\alpha, 0}= 0
$$
where $c_{\alpha, j}$ are in $M$.
Now, $M$ is algebraic over $N(\theta)$.  We claim that
$c_{\alpha, j}\in {\bar N}$ and $s_\alpha\in {\bar N}$
where ${\bar N}$ is algebraic closure of $N$ in $M$.
We prove this inductively.  The claim
is vacuously true if length of $\alpha$ is big enough
so that $c_{\alpha, i} = 0$ and $s_\alpha=0$.
So we may assume that our claim is true for all
multiindices of higher length.
Note that for $j>0$ $c_{\alpha, j}$ is sum of $s_{\alpha + e_j}$ and
terms coming from polylogs.  $c_{\alpha, 0}$ is sum of terms coming
from polylogs and for $\alpha = 0$ also includes $g_F - f$.
Of course terms coming from polylogs and $g_F - f$ are in $N$.
Consequently by the inductive assumption
$c_{\alpha, j} \in \bar N$.
Hence, by the lemma \ref{pole-indep} for $j>0$ we
have $c_{\alpha, j} = 0$, so $s_\alpha$ has derivative in
$\bar N$.  Consequently, since $\theta$ is an exponential we
have $s_\alpha \in \bar N$.

Now, we look at equality with $\alpha = 0$.  From derivative
of $E$ we get
$Ds_0 + \sum_{j>0} s_{e_j}\frac{D(\psi_j)}{\psi_j}$.  From
$$d_i\frac{D(1- h_i)}{1 - h_i}\log(h_i)^{k_i}$$
we get
$$
d_i\left(\frac{D(v_i)}{v_i} + \sum m_{i,j}\frac{D(\psi_j)}{\psi_j}\right)
\log(u_i)^{k_i}
$$
so
$$
f = g_F + \sum d_i\frac{D(v_i)}{v_i} \log(u_i)^{k_i} + Ds_0 +
\sum c_{0, j}\frac{D(\psi_j)}{\psi_j}.$$
We proved above that $c_{0, j} = 0$ for $j>0$, so 
this simplifies to
$$
f = g_F + \sum d_i\frac{D(v_i)}{v_i}\log(u_i)^{k_i} + Ds_0.
$$

We look at $u_i$ and $v_i$.  First, if $u_i$ or $v_i$ have a pole
at $p$, then $v_i = -u_i$ and we chose logarithm in such a
way that $\log(v_i) = \log(u_i)$, so we get
$$
\frac{D(u_i)}{u_i}\log(u_i)^{k_i} = \frac{1}{k_i+1}D(\log(u_i)^{k_i+1})
$$
and we can move such terms to elementary part.
If $u_i = 1$ or $v_i = 1$, then
the $\frac{D(u_i)}{u_i}\log(v_i)^{k_i}$ term vanishes.  In
particular this happens when $u_i$ or $v_i$ have zero at $p$.
Otherwise $u_i = 1 - v_i$ and we get polylog with argument
algebraic over $F$.
\end{proof}

\section{Extension by primitive}

We would like to investigate equalities for $I_k$.  To motivate
our approach
consider a vector space $V \subset K$ and its $k$-th tensor
power $V^{\otimes (k)}$.  On $V^{\otimes (k)}$ we
consider linear map $\Psi$ on simple tensor given by
$$g\otimes f_1 \dots \otimes f_{k-1} \mapsto Dg\prod_{l=1}^{k-1}f_l.$$
When $g = \log(1 - u)$ and $f_l = \log(u)$ we have
$$
\Psi(g\otimes f_1 \dots \otimes f_{k-1}) = DI_k.
$$
When tensor $s$ is symmetric then $\Psi(s)$ is a derivative
of element of $K$, so is negligible from the point of view
of integration.  So we are lead to study identities in
$V^{\otimes (k)}$ modulo symmetric tensors.  Below
we consider only case when $k=2$.

\begin{lemma}\label{tens-eq}
Assume $V$ is a vector space, $u, v, w_i, w_{i,j} \in V$,
$k_i, l_i \in {\mathbb Z}$, $k_i = l_i$ when $k_i < 0$ or $l_i < 0$,
$u = \sum_{i} k_iw_{i,j}$ for $l_j > 0$, $v = \sum_{i} l_iw_{i,j}$ for $k_j>0$,
$v - \sum_{i} l_iw_{i,j} = u - \sum_{i} k_iw_{i,j}$
when $k_j < 0$.
Put
$$
M_{i, j} = w_i\otimes w_j +  w_i\otimes w_{j,i} +  w_{i,j}\otimes w_j.
$$
Then
$$
(\sum_i k_iw_i + u)\otimes (\sum_j l_jw_j + v)
- \sum_{i,j} k_il_jM_{i, j} - u\otimes v$$
is a symmetric tensor.
\end{lemma}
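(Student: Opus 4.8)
The plan is to expand the tensor expression bilinearly and watch almost everything collapse. Write $A=\sum_i k_iw_i+u$ and $B=\sum_j l_jw_j+v$, so that the quantity in question is $A\otimes B-\sum_{i,j}k_il_jM_{i,j}-u\otimes v$. Expanding $A\otimes B$, the ``diagonal'' part $\sum_{i,j}k_il_j\,w_i\otimes w_j$ cancels exactly against the $w_i\otimes w_j$ contribution of the $M_{i,j}$, the stray $u\otimes v$ cancels, and after regrouping the remaining cross terms with the $w_i\otimes w_{j,i}$ and $w_{i,j}\otimes w_j$ pieces of the $M_{i,j}$ one is left with
$$T\;=\;\sum_i k_i\,w_i\otimes\left(v-\sum_j l_jw_{j,i}\right)\;+\;\sum_j l_j\left(u-\sum_i k_iw_{i,j}\right)\otimes w_j .$$
So it suffices to prove that $T$ is symmetric.

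Next I would kill off almost all summands. In the first sum a term with $k_i=0$ is already zero; when $k_i>0$, the hypothesis $v=\sum_i l_iw_{i,j}$ for $k_j>0$, with $i$ and $j$ interchanged, gives $v-\sum_j l_jw_{j,i}=0$. Symmetrically, in the second sum the terms with $l_j=0$ vanish and those with $l_j>0$ vanish by $u=\sum_i k_iw_{i,j}$. So only indices with $k_i<0$, respectively $l_j<0$, can contribute. Here the first hypothesis ($k_i=l_i$ whenever $k_i<0$ or $l_i<0$) is the crux: it forces $\{i:k_i<0\}$ and $\{i:l_i<0\}$ to be one and the same set $N$, with $k_i=l_i$ on $N$. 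For $i\in N$ the last hypothesis, again with $i$ and $j$ interchanged, says that $v-\sum_j l_jw_{j,i}$ and $u-\sum_j k_jw_{j,i}$ coincide; denote the common value by $p_i$.

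The two surviving pieces of $T$ then read $\sum_{i\in N}k_i\,w_i\otimes p_i$ and $\sum_{j\in N}l_j\,p_j\otimes w_j=\sum_{i\in N}k_i\,p_i\otimes w_i$, whence
$$T\;=\;\sum_{i\in N}k_i\left(w_i\otimes p_i+p_i\otimes w_i\right),$$
which is a sum of symmetric tensors and is therefore symmetric. This completes the argument.

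There is no genuine obstacle here; the proof is a bookkeeping exercise. The one thing that demands care is the index juggling — keeping straight which slot of $w_{i,j}$ is being summed in each of the three hypotheses versus in the expansion of $A\otimes B$ — together with the key observation that it is precisely the first hypothesis $k_i=l_i$ on negative indices that forces the two residual index sets to coincide, which is what makes the leftover terms pair into the manifestly symmetric combinations $w_i\otimes p_i+p_i\otimes w_i$. Without that observation one could not close the computation term by term.
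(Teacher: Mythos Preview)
Your proof is correct and follows essentially the same approach as the paper's: both expand bilinearly to reduce the expression to $T$ (the paper's $S_1$), eliminate the positive-index terms via the hypotheses, and then use $k_i=l_i$ on negative indices to pair the surviving terms into the symmetric form $\sum k_i(w_i\otimes p_i+p_i\otimes w_i)$ (the paper's $t_j\otimes l_jw_j+l_jw_j\otimes t_j$). Your write-up is in fact slightly more explicit about why the two residual index sets coincide.
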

\begin{proof}
We have
$$
(\sum_i k_iw_i + u)\otimes (\sum_j l_jw_j + v)
= (\sum_i k_iw_i)\otimes (\sum_j l_jw_j) $$
$$ 
+
u \otimes (\sum_j l_jw_j)
 + (\sum_i k_iw_i) \otimes v
 + u \otimes v.$$
From $M_{i, j}$ we get
$$
\sum k_il_jM_{i, j} = (\sum_i k_iw_i)\otimes (\sum_j l_jw_j)
+ \sum_i k_iw_i\otimes (\sum_j l_j w_{j,i})
$$
$$
+ \sum_j (\sum_i k_i w_{i,j})\otimes l_jw_j.
$$
So, sum in our claim is
$$
S_1 = \sum_j (u - \sum_i k_i w_{i,j})\otimes  l_jw_j
+ \sum_i k_i w_i\otimes(v - \sum_j l_j w_{j,i})
$$
It remains to show that $S_1$ is symmetric.

By assumption, when $l_j > 0$ we have $u = \sum_i k_iw_{i,j}$ so
$$
(u - \sum_i k_i w_{i,j}) \otimes l_jw_j = 0
$$
and
$$
\sum_{l_j > 0} (u - \sum_i k_i w_{i,j})\otimes  l_jw_j = 0.
$$
Similarly
$$
\sum_{k_i > 0} 
k_i w_i\otimes(v - \sum_j l_j w_{j,i}) = 0.
$$
By assumption, when $l_j < 0$ (so also $k_j < 0$) there are $t_j$ such that
$$
u - \sum_i k_i w_{i,j} = v - \sum_i l_iw_{i, j} = t_j.
$$
Using the equalities above we get
$$
S_1 = \sum_j (u - \sum_i k_i w_{i,j}) \otimes l_jw_j +
      \sum_i k_iw_i\otimes (v - \sum_j l_j w_{j,i})
$$
$$=
\sum_{l_j < 0}(u - \sum_i k_i w_{i,j}) \otimes l_jw_j +
      \sum_{k_i < 0}k_iw_i\otimes (v - \sum_j l_j w_{j,i})
$$
$$
= \sum_{l_j < 0}t_j\otimes l_jw_j +
  \sum_{k_i < 0}k_iw_i\otimes t_i =
 \sum_{l_j < 0}(t_j\otimes l_jw_j + l_jw_j\otimes t_j).
$$
where the last equality follows since negative $k_j$
are the same as negative $l_j$.  Since the result is
symmetric this ends the proof.
\end{proof}

\begin{lemma}\label{prep-ext}
Assume that $F$ is a differential field, $K$ is a differential
field algebraic over $F(\theta)$, 
$F$ and $K$ have the same constants,
$\theta$ is
a primitive over $F$.
Let $h_i \in K$ be a finite family.
Let $V$ be vector space over constants spanned by
logarithms with arguments algebraic over $F$.  Let $W$ be
vector space over constants spanned by logarithms of $h_i$ and $1 - h_i$.
Let $A$ be set of zeros and poles of $h_i$ and $1 - h_i$ in
algebraic closure of $F$.
There exist vector space over constants $X$, embedding $\iota$
from $V + W$ into $X\oplus V$, elements $\delta_a \in X$,
elements $u_i, v_i$ algebraic over $F$, elements $\beta_{a, b} \in V$
such that
$$
\iota(\log(h_i)) = \log(u_i) + \sum_{a\in A}\ord(h_i, a)\delta_a
$$
$$
\iota(\log(1 - h_i)) = \log(v_i) + \sum_{a\in A}\ord(1 -h_i, a)\delta_a
$$
when $\ord(1 - h_i, b) > 0$ we have
$$
\log(u_i) = \sum_{a\in A} \ord(h_i, a)\beta_{a, b}
$$
when $\ord(h_i, b) > 0$ we have
$$
\log(v_i) = \sum_{a\in A} \ord(1 - h_i, b)\beta_{a, b}
$$
when $\ord(h_i, b) < 0$ we have
$$
\log(u_i) + \sum_{a\in A} \ord(h_i, a)\beta_{a, b}
= \log(v_i) + \sum_{a\in A} \ord(1 - h_i, a)\beta_{a, b}.
$$
Moreover, for each $i$ either $v_i = 1 - u_i$ or $v_i = -u_i$
or $v_i = 1$ or $u_i = 1$.
\end{lemma}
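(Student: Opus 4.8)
The plan is to realise $K\bar{F}$ as the function field of a smooth projective curve $C$ over $\bar{F}$, so that each $h_i$ and $1-h_i$ has a divisor and $A$ is the finite set of all their zeros and poles (normal and special alike). Fix one place $p$ of $C$ and a local parameter at $p$, and let $u_i$ (resp. $v_i$) be the leading coefficient of the local expansion of $h_i$ (resp. $1-h_i$) at $p$; these lie in the residue field of $p$, hence are algebraic over $F$. The closing dichotomy is then immediate from the behaviour of $h_i$ at $p$: if $h_i$ is regular and nonzero at $p$ with $h_i(p)\neq 1$ then $u_i=h_i(p)$ and $v_i=1-h_i(p)=1-u_i$; if $h_i$ has a pole at $p$ then $1-h_i$ has a pole there of the same order with opposite leading coefficient, so $v_i=-u_i$; if $h_i(p)=1$ then $u_i=1$; and if $h_i$ has a zero at $p$ then $1-h_i$ has value $1$ there, so $v_i=1$.

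For the core of the statement I would use Lemma \ref{mult-free}: the group generated by the $h_i$ and the $1-h_i$ modulo $\bar{F}^{*}$ is free abelian, so pick generators $\psi_1,\dots,\psi_r$, multiplicatively independent modulo $\bar{F}^{*}$, scaled (by constants, which does not move divisors) so that each $\psi_j$ has leading coefficient $1$ at $p$. Then $h_i=u_i\prod_j\psi_j^{m_{ij}}$ and $1-h_i=v_i\prod_j\psi_j^{m'_{ij}}$ with $m_{ij},m'_{ij}\in\mathbb{Z}$ and with $u_i,v_i$ the leading coefficients above, and $\ord(h_i,a)=\sum_j m_{ij}\ord(\psi_j,a)$. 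Let $X$ be the span over the constants of $\log(\psi_1),\dots,\log(\psi_r)$. By Lemma \ref{pole-indep0} (applied over $\bar{F}$), any constant-linear combination of the $\log(\psi_j)$ with derivative in $\bar{F}$ is zero; this gives both that the $\log(\psi_j)$ are linearly independent over the constants (so $\dim X=r$) and that $X\cap V=0$, whence $V+W\subseteq X\oplus V$ since $\log(h_i)=\log(u_i)+\sum_j m_{ij}\log(\psi_j)$. The matrix $\bigl(\ord(\psi_j,a)\bigr)_{j,a}$ has rank $r\le|A|$, so, $X$ being a $\mathbb{Q}$-vector space, the system $\sum_{a\in A}\ord(\psi_j,a)\delta_a=\log(\psi_j)$ has a solution $\delta_a\in X$. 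With this choice $\log(h_i)=\log(u_i)+\sum_a\ord(h_i,a)\delta_a$ and likewise for $1-h_i$, so one may simply take $\iota$ to be the inclusion $V+W\hookrightarrow X\oplus V$, which is obviously an embedding realising the first two displayed identities.

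There remain the $\beta_{a,b}$. Fix $b\in A$ and let $S_b$ be the group of $g\in(K\bar{F})^{*}$ that are units at $b$ with $g(b)$ a root of unity; note $h_i\in S_b$ when $\ord(1-h_i,b)>0$, $1-h_i\in S_b$ when $\ord(h_i,b)>0$, and $h_i/(1-h_i)\in S_b$ (with value $-1$ at $b$) when $\ord(h_i,b)<0$. The assignment $\divisor(g)\mapsto\log(\mathrm{lc}_p(g))$, where $\mathrm{lc}_p$ denotes leading coefficient at $p$, is a well-defined homomorphism on the subgroup of $\mathbb{Z}^{A}$ generated by $\{\divisor(g):g\in S_b\}$: if an integer combination of these divisors vanishes, the corresponding product of the $g$'s has trivial divisor, so is a constant, which evaluated at $b$ is a root of unity, so its leading coefficient at $p$ is a root of unity and the matching combination of $\log(\mathrm{lc}_p(g))$ vanishes. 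Since $V$ is divisible this homomorphism extends to $\mathbb{Z}^{A}\to V$; setting $\beta_{a,b}$ to be the image of the $a$-th basis vector gives $\sum_a\ord(g,a)\beta_{a,b}=\log(\mathrm{lc}_p(g))$ for every $g\in S_b$. Specialising to $g=h_i$, $g=1-h_i$ and $g=h_i/(1-h_i)$, and using multiplicativity of $\mathrm{lc}_p$ together with $\mathrm{lc}_p(h_i)=u_i$, $\mathrm{lc}_p(1-h_i)=v_i$, yields the three remaining relations in exactly the form needed to apply Lemma \ref{tens-eq} afterwards.

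I expect two points to carry the weight. The first is the independence statement $X\cap V=0$: this is where the hypotheses that $\theta$ is a primitive and that $F$ and $K$ have the same constants are genuinely used, through the behaviour of logarithmic derivatives at normal and at special places, as in Lemmas \ref{pole-indep0} and \ref{pole-indep}. The second is pinning down the $\beta_{a,b}$ so that every equality, and in particular the one for $\ord(h_i,b)<0$, emerges in precisely the shape Lemma \ref{tens-eq} demands; the delicate point there is the bookkeeping with the convention $\log(-1)=0$, which is exactly what licenses placing $h_i/(1-h_i)$ in $S_b$ when $h_i$ has a pole at $b$.
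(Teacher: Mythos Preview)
Your proof is correct and follows the paper's route: the same choice of a place $p$, the same normalized generators $\psi_j$ of the group generated by the $h_i,1-h_i$ modulo $\bar F^*$, the same identification of $u_i,v_i$ as leading coefficients at $p$ (whence the four-case dichotomy), and the same independence input from Lemmas~\ref{mult-free} and~\ref{pole-indep0}. The differences are only in packaging. You take $X$ to be the concrete span of the $\log\psi_j$ and $\iota$ the inclusion, and then \emph{solve} the rank-$r$ order system for the $\delta_a$; the paper instead takes $X$ abstract with basis $\{\delta_a\}$ and defines $\iota$ via order vectors, then verifies injectivity. These are dual descriptions of the same map. For $\beta_{a,b}$ the paper uses leading coefficients of the $\psi_j$ at the variable place $b$ (setting $\gamma_{j,b}=-\log(\mathrm{lc}_b\psi_j)$) and transports along $\iota$, whereas you use leading coefficients at the fixed place $p$ and extend a homomorphism from a divisor subgroup; both constructions yield $\beta_{a,b}$ satisfying the required relations. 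Your route has the pleasant feature that a single evaluation-at-$b$ argument replaces three separate leading-coefficient computations. (Both the paper's proof and yours produce the third relation with a minus sign, which is what Lemma~\ref{tens-eq} actually requires; the $+$ in the lemma statement is a typo.)

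One point to tighten: as written, your $S_b$ admits arbitrary $g\in(K\bar F)^*$ with $g(b)$ a root of unity, and such $g$ need not have divisor supported in $A$. Then ``an integer combination of these divisors vanishes in $\mathbb{Z}^A$'' no longer forces the product to have trivial divisor, so the well-definedness step breaks. Restrict $S_b$ to the group generated by the $h_i$, $1-h_i$ and $\bar F^*$ (equivalently, to $g$ with $\divisor(g)\in\mathbb{Z}^A$); the three functions $h_i$, $1-h_i$, $h_i/(1-h_i)$ you actually specialise to already lie there, and with that restriction your argument goes through verbatim.
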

\begin{proof}
Let $\bar F$ be algebraic closure of $F$.
Consider multiplicative
group $G$ generated by $h_i$, $1 - h_i$ and $\bar F_{*}$.  Choose place $p$ of
$\bar F K$.  Let $\psi_j$ be
generators of $G$ modulo $\bar F_{*}$, normalized so that
leading coefficient of Puiseaux expansion at $p$ is $1$.
By Lemma \ref{mult-free}
$G$ modulo $\bar F_{*}$ is a free abelian group.
As generators of free abelian group $\psi_j$ are
multiplicatively independent modulo $\bar F_{*}$.  Consequently,
by lemmas \ref{pole-indep0} and \ref{log-lin}
$\log(\psi_j)$ are algebraically independent over $F$.

There are integers $m_{i, j}$ and $n_{i, j}$ and elements
$u_i, v_i \in \bar F$
such that
$$
h_i = u_i\prod \psi_j^{m_{i,j}},
$$
$$
(1 - h_i) = v_i\prod \psi_j^{n_{i,j}}.
$$
Note that when both $h_i$ and $(1 - h_i)$ have order $0$ at $p$, then
$v_i = 1 - u_i$.  When one of $h_i$ and $(1 - h_i)$ have negative
order at $p$, then also the second have negative order
and $v_i = u_i$.  When $h_i$ have positive order at $p$, then
$v_i = 1$, When $1 - h_i$ have positive order at $p$, then
$u_i = 1$.  This covers all cases, so the last claim holds.

Let $W_0$ be vector space over constants spanned by
$\{\log(\psi_j)\}$ and
$\bar W$ be vector space over constants spanned by
$W_0$ and $V$. 
Let $\bar A$ be set of zeros and poles of $\psi_j$-s over
$\bar F$.  We take as $X$ vector space over constants with
basis $\delta_a$, $a \in \bar A$ (actually, for final result we
only need $a \in A$, but for the proof we need larger $X$).

We define $\iota$ on $\bar W$ by the formula
$$
\iota(\log(\psi_j)) = \sum_{a\in \bar A}\ord(\psi_j, a)\delta_a,
$$
on $W_0$ and as identity on $V$.
Clearly on $V$
$\iota$ is well-defined and is injective.
Since $\psi_j$ are multiplicatively independent modulo algebraic
closure of $F$ $\log(\psi_j)$ are linearly independent modulo $V$
so $\bar W$ is a direct sum of $W_0$ and $V$.
In particular
it follows that $\iota$ is well defined.  Also, element in kernel
of $\iota$ has form
$$
\sum c_j\log(\psi_j)
$$
where $c_j$ are constants such that for each $a \in \bar A$
we have
$$
\sum c_j \ord(\psi_j, a) = 0.
$$
Let $e_l$ be basis of vector space over rational numbers spanned
by $c_j$-s.  We can take $e_l$ such that all $c_j$ have integer
coordinates.  That is
$$
c_j = \sum_l q_{j,l}e_l.
$$
Then, for each $a \in \bar A$
$$
\sum_l e_l(\sum_j q_{j,l}\ord(\psi_j, a)) = 0.
$$
Since $e_l$ are linearly independent over rationals that means
that for each $l$ and $a$
$$
\sum_j q_{j,l}\ord(\psi_j, a) = 0.
$$
Since function without zeros and poles is in $\bar F_{*}$ and
$\psi_j$ are multiplicatively independent over $\bar F_{*}$
equality above means that $q_{j,l} = 0$ so also $c_j = 0$.
This means that kernel of $\iota$ is trivial, that is
$\iota$ is injective.

We have
$$
\iota(\log(h_i)) = \iota(\log(u_i) + \sum m_{i,j}\log(\psi_j))
$$
$$
= \log(u_i) + \sum m_{i,j}\sum_a\ord(\psi_j, a)\delta_a
= \log(u_i) + \sum_a\ord(h_i, a)\delta_a
$$
and similarly for $\log(1 - h_i)$.
This shows first two conditions in conclusion of the lemma.

We need to define $\beta_{a, b}$ and show that they have
required properties.  We first define $\alpha_{j, a}$ as
leading coefficient of Puiseaux expansion of $\psi_j$
at $a$ and put $\gamma_{j, a} = -\log(\alpha_{j, a})$.

When $a$ is a zero of $h_i$ at $a$ we have $1 - h_i = 1$ so
$$
1 = v_i\prod \alpha_{j, a}^{n_{i, j}}
$$
and
$$
\log(v_i) = \sum_j n_{i, j}\gamma_{j, a}.
$$
When $a$ is z zero of $1 - h_i$ at $a$ we have
$h_i = 1 - (1 - h_i) = 1$ so
$$
1 = u_i\prod \alpha_{j, a}^{m_{i, j}}
$$
and
$$
\log(u_i) = \sum_j m_{i, j}\gamma_{j, a}.
$$
When $a$ is a pole of $h_i$, then $a$ is also a pole of $1 - h_i$ and
we have
$$
u_i\prod \alpha_{j, a}^{m_{i,j}} = -w_1\prod \alpha_{j, a}^{n_{i,j}}
$$
so
$$
\log(u_i) - \sum_j m_{i, j}\gamma_{j, a} =
\log(w_i) - \sum_j n_{i, j}\gamma_{j, a}.
$$
For fixed $b$ we can view $\gamma_{j, b}$ as values of
a linear operator $T_b$ defined on $W_0$ by the formula:
$$
T_b(\sum c_j\log(\psi_j)) = \sum c_j\gamma_{j, b}.
$$
Since $\iota$ is injective on $W_0$ and takes values in $X$
we can treat $T_b$ as operator defined on a subspace of $X$ and
extend it to linear operator $\tilde T_b$ defined on whole $X$.
We put
$\beta_{a, b} = T_b(\delta_a)$.  Since
$\iota(\log(\psi_j)) = \sum_a \ord(\psi_j, b)\delta_a$ we have
$\gamma_{j, b} = \sum_a \ord(\psi_j, a)\beta_{a, b}$.
Next, when $\ord(1 - h_i, b) > 0$ (that is $b$ is zero
of $1 - h_i$) we have
$$
\log(u_i) = \sum_j m_{i, j}\gamma_{j, b} =
\sum_j m_{i, j}\sum_a \ord(\psi_j, a)\beta_{a, b} 
$$
$$
= \sum_a(\sum_j m_{i, j}\ord(\psi_j, a)) \beta_{a, b}
= \sum_a\ord(h_i, a)) \beta_{a, b}.
$$
Similarly
when $\ord(h_i, b) > 0$ we have
$$
\log(v_i) = \sum_a\ord(1 - h_i, a)) \beta_{a, b}
$$
and when $\ord(h_i, b) < 0$ we have
$$
\log(u_i) - \sum_a\ord(h_i, a)) \beta_{a, b}
= \log(v_i) - \sum_a\ord(1 - h_i, a)) \beta_{a, b}
$$
so $\beta_{a, b}$ satisfy conclusion of the lemma.
\end{proof}

Now we can prove Theorem \ref{prim-case}

\begin{proof}
When $\theta$ is algebraic over $F$ there is nothing to
prove.  So we may assume that $\theta$ is transcendental
over $F$.
Let $\bar F$ be algebraic closure of $F$.  We have
$$
f = g_F + DE + \sum d_i\frac{D(1 - h_i)}{1 - h_i}\log(h_i)
$$
where $g_F$ is sum of dilog terms with arguments in $\bar F$,
$E$ denotes elementary parts and $h_i \in K - \bar F$.  Since
$$
D(\log(1 - h_i)\log(h_i)) = \frac{D(1- h_i)}{1 - h_i}\log(h_i)
+ \frac{D(h_i)}{h_i}\log(1 - h_i)
$$
by changing elementary part we may assume that dilog terms are
antisymmetric, that is
\begin{equation}\label{inteq}
f = g_F + DE + \sum d_i\left(\frac{D(1 - h_i)}{1 - h_i}\log(h_i) - 
\frac{D(h_i)}{h_i}\log(1 - h_i)\right).
\end{equation}
Without loss of generality we may assume that arguments of
logarithms in elementary part are either in $F$ or appear
among $h_i$.  We now use Lemma \ref{prep-ext} obtaining
$u_i$, $v_i$, etc. with properties stated in the Lemma.
Like in the proof of the Lemma we introduce space $\bar W$
containing $\log(h_i), \log(1 - h_i), \log(u_i), \log(v_i)$.
On $\bar W \otimes \bar W$ we consider mapping $\Psi$ given
by the formula
$$
\Psi(t \otimes s) = (Dt)s.
$$
We have
$$
\sum d_i\left(\frac{D(1 - h_i)}{1 - h_i}\log(h_i) -
\frac{D(h_i)}{h_i}\log(1 - h_i)\right)
$$
$$
= \sum d_i
\Psi\left(
\log(1 - h_i) \otimes \log(h_i) - \log(h_i) \otimes \log(1 - h_i)
\right).
$$
Now, to prove the theorem it is enough to show that
$$
S_1 =
\sum d_i
(
\log(1 - h_i) \otimes \log(h_i) - \log(h_i) \otimes \log(1 - h_i)
$$
$$
- \log(v_i) \otimes \log(u_i) + \log(u_i)\otimes \log(v_i))
$$
is a symmetric tensor.  Namely, on symmetric tensor
$\Psi(t\otimes s + s\otimes t) = D(st)$ so values of $\Psi$
on symmetric tensors are derivatives of elementary functions.
So by changing elementary part we can replace
$$
\sum d_i\left(\frac{D(1 - h_i)}{1 - h_i}\log(h_i) -
\frac{D(h_i)}{h_i}\log(1 - h_i)\right)
$$
by
$$
\sum d_i\left(\frac{D(v_i)}{v_i}\log(u_i) -
\frac{D(u_i)}{u_i}\log(v_i)\right).
$$
By the last claim of Lemma \ref{prep-ext} we have four possibilities
for $u_i$ and $v_i$.  When $u_i = 1 - v_i$ term of the sum above
is just dilog term with argument algebraic over $F$.  When
$v_i = -u_i$, then $\log(v_i) = \log(u_i)$ and corresponding term
is zero.  Also, when $u_i = 1$ or $v_i = 1$,
then corresponding term is zero.  So all dilog terms have
arguments algebraic over $F$ as claimed.

It remains to show that $S_1$ is a symmetric tensor.  Since $\iota$
is an embedding, it is enough to show that $(\iota\otimes\iota)(S_1)$ is
symmetric.   By Lemma \ref{tens-eq} putting
$$
M_{a, b} = \delta_a\otimes \delta_b + \delta_a\otimes\beta_{b, a}
+ \beta_{a, b}\otimes \delta_b.
$$
for each $i$ we have
$$
(\iota\otimes\iota)(\log(1 - h_i)\otimes\log(h_i) - \log(v_i)\otimes\log(u_i))
$$
$$ =
\sum_{a, b} \ord(1 - h_i, a)\ord(h_i, b)M_{a, b}
$$
modulo symmetric tensors.  Consequently, modulo symmetric tensors
$$
(\iota\otimes\iota)(S_1) = \sum_i d_i\sum_{a, b}\ord(1 - h_i, a)\ord(h_i, b)
(M_{a, b} - M_{b, a}).
$$
So, it is enough to show that the last sum equals $0$.
Consider projection $\pi$ from $V \oplus X$ onto space $X$.
We have
$$
(\pi\otimes \pi)(M_{a, b} - M_{b, a}) = 
\delta_a\otimes \delta_b - \delta_b\otimes \delta_a
$$
so projections of $M_{a, b} - M_{b, a}$ give linearly independent
antisymmetric tensors.  So it is enough to show that
$(\pi\otimes \pi)(\iota\otimes\iota)(S_1)$ is $0$, because then coefficients
of $M_{a, b} - M_{b, a}$ must be zero.  However, $S_1$ is
in $(V+W)\otimes(V+W)$ so we can give more explicit formula for
$(\pi\otimes \pi)(\iota\otimes\iota)(S_1)$.  Let $W_0$ be space spanned by
$\log(\psi_j)$ from the proof of Lemma \ref{prep-ext}.  We
showed that $V+W = V\oplus W_0$ and by formula for $\iota$
we see that $\pi\iota$ is just projection $\chi$ from $V+W$ onto $W_0$
followed by embedding from $W_0$ into $X$.  So, it is enough
to show that $S_2 = (\chi\otimes\chi)S_1$ equals $0$.
$\chi$ maps $\log(u_i)$ and $\log(v_i)$
to $0$ while $\log(h_i)$ and $\log(1 - h_i)$ are mapped
to linear combinations of $\log(\psi_j)$.  Note that
$$
\log(1 - h_i)\otimes\log(h_i) - \log(h_i)\otimes\log(1 - h_i)
$$
is mapped to an antisymmetric tensor.
So $S_2$ is an antisymmetric tensor.
Consider $\Psi(S_2)$.  We have
$$
S_2 = \sum c_{k,j}\log(\psi_k)\otimes\log(\psi_j),
$$
$$
\Psi(S_2) = \sum c_{k,j}D(\log(\psi_k))\log(\psi_j)
$$
where $c_{k,j}$ are constants and $c_{k,j} = -c_{j, k}$.
Note that $\log(h_i) - \chi(\log(h_i)) = \log(u_i)$
and similarly for $1 - h_i$, so $\Psi(S_1) = \Psi(S_2) + \Psi(R)$
where $R$ contains terms containing $\log(u_i)$ and $\log(v_i)$.
Let $L$ be $F$ extended by $u_i$, $v_i$, $\log(u_i)$ and $\log(v_i)$.
Now, expand equation (\ref{inteq}) as polynomial in $\log(\psi_j)$
and consider terms linear in $\log(\psi_j)$.  Elementary
part is a polynomial of degree $2$ with coefficients in $KL$.
Since terms of second order in (\ref{inteq}) are $0$ coefficients
of second order elementary terms are constants, that is
$$
E = \sum b_{k, j}\log(\psi_k)\log(\psi_j) + \sum s_j\log(\psi_k) + s_0
$$
where $s_j \in KL$, and $b_{k, j}$ are constant which we can
assume to be symmetric, that is $b_{k, j} = b_{j, k}$.
So coefficient of
$\log(\psi_j)$ in (\ref{inteq}) is
$$
\sum_k (c_{k,j} + 2b_{k, j})D(\log(\psi_k)) + D(a_j)
$$
where $a_j \in KL$ is a sum of a linear combination
of logarithms with arguments algebraic over $F$ coming from $\Psi(S_1)$
and $s_j$.  Using Lemma \ref{pole-indep0} we see that
$c_{k,j} + 2b_{k, j} = 0$.  Since $c_{k,j}$ are antisymmetric
and $b_{k, j}$ are symmetric it follows that $c_{k,j} = 0$.
However, this means that $S_2 = 0$, which ends the proof.
\end{proof}

\section{Further remarks}

Our lemmas \ref{tens-eq} and \ref{prep-ext} correspond to
Proposition 2 in \cite{Bad94} and \cite{Bad06}.  When
in Lemma \ref{prep-ext} field
$K$ is purely transcendental over $F$ we could use
infinite place only for normalization (do not include it in $A$),
take $\delta_a = \log(\theta - a)$ and $\beta_{a, b} = \log(a - b)$.
Baddoura instead of our $M_{i, j}$ uses Spence function
evaluated at $(\theta - b)/\theta - 0)$.  Both expression
gave the same main part.  Spence function
has builtin antisymmetry, we work modulo symmetric tensors.
Also, our $M_{i, j}$ omits lowest order term present in
Spence function.  This considerably simplifies handling
of lowest order terms in our proof.  Baddoura uses different
condition on $\beta_{a, b}$, which is equivalent to our condition
in transcendental case thanks to symmetry of $\beta_{a, b}$,
but in general we are unable to find symmetric $\beta_{a, b}$.

Our lemmas \ref{tens-eq} and \ref{prep-ext} are harder to use
than Baddoura's identity: we do not know if there is actual
function associated to an element of $X$.

Tensor products seem to be standard tool used for studying
polylogarithms (we learned about it from \cite{Duhr:poly})
but seem to be new in context of symbolic integration.

Our Lemma \ref{log-int} is simple and should be well-known
(it could be easily extracted from proof of Liouville
theorem).  It replaces longish arguments used in
\cite{Bad94} and \cite{Bad06}.

We hope to extend Theorem \ref{prim-case} to polylogarithms
of arbitrary order.  For purpose of symbolic integration
one would like to have more precise information.  In particular,
we would like to have method to find arguments of polylogarithms
needed in given integral.  Here, our current result have significant
weakness: in principle we need arbitrary algebraic extension
to find arguments of polylogarithms.  It seems reasonable that
we only need new algebraic constants and that we can find
arguments of polylogarithms in base field extended by constants.

\end{document}